\newtheorem{thm}{Theorem}[section]
\newtheorem{prop}[thm]{Proposition}
\newtheorem{lem}[thm]{Lemma}
\newtheorem{fact}[thm]{Fact}
\newtheorem{rem}[thm]{Remark}
\theoremstyle{definition}
\newtheorem{dfn}[thm]{Definition}
\newtheorem{exm}[thm]{Example}
\begin{document}

\title{ Probability Logic:\\ A Model Theoretic Perspective}
\author{M. Pourmahdian\footnote{School of Mathematics, Institute for Research in Fundamental Sciences (IPM) and Department of Mathematics and Computer Science, Amirkabir University of Technology, Tehran, Iran.  E-mail: pourmahd@ipm.ir.}\\R. Zoghifard\footnote{School of Mathematics, Institute for Research in Fundamental Sciences (IPM), Tehran, Iran.   E-mail: r.zoghi@gmail.com.}}

\date{}
\maketitle

\begin{abstract}
	In this paper (propositional) probability logic ($PL$)  is investigated  from model theoretic point of view. First of all, the ultraproduct construction is adapted for $\sigma$-additive probability models, and subsequently when this class of models is considered  it is shown that the compactness property holds  with respect to a fragment of $PL$ called basic probability logic ($BPL$). On the other hand, when dealing with finitely-additive probability models, one may extend the compactness property for a larger fragment of probability logic, namely positive probability logic ($PPL$).  We finally prove that while the L\"owenheim-Skolem number of the class of  $\sigma$-additive probability models is uncountable, it is $\aleph_0$ for the class of finitely additive  probability models.	
\end{abstract}

\textit{Keywords:} Probability modal logic, Type spaces, Ultraproduct construction, Henkin method, Compactness property, L\"owenheim-Skolem number.

\section{Introduction and Preliminaries }
Propositional probability logic ($PL$) is a framework for specifying  and analyzing properties of  structures involving probability, e.g. probability spaces or Markov processes. This logic provides rules of reasoning about these structures. This natural logic is a modal logic in which bounds on probability are treated as modal operators. So, for each $\alpha\in\mathbb{Q}\cap [0,1]$, the language of $PL$ includes a modal operator $L_{\alpha}$ interpreted as (an agent) assigns probability at least $\alpha$. This logic is shown to be useful in many research areas such as theoretical computer science, economics and philosophy. For example this logic might be used to reason about behavior of a program under probabilistic assumptions about inputs.  Also one can highlight how probability systems and particular probability logic play a crucial rule in game theory.
A type space is an example of a probabilistic system, introduced by Harsanyi in \cite{harsan:games68}, provides an implicit description of beliefs in games with incomplete information. So, in particular $PL$ is known to be useful for studying of type spaces.

This logic is studied from different perspectives. There is a rich source of papers involving  full axiomatization of this logic, aiming to show different type of completeness results, \cite{fagin:logic90,heifmon:prob01,zhou:comdeduc09,zhou:finad09}.  A coalgebraic point of view is another source of research in this area, \cite{goldblatt2010deduction,moss2004harsanyi}.  The aim of this paper is to study this logic from model theoretic perspectives.

To be able to state our results in technical terms we review basic concepts of $PL$.

Assume that   $\mathcal{P}$ is a countable set of propositional variables.
The syntax of probability logic is obtained by adding countable probability modal operators $L_r,M_r$ for each $r\in\mathbb{Q}\cap[0,1]$ to propositional logic. When applying the operator $L_r$ to a formula $\varphi$, then $L_r \varphi$ is interpreted as ``the formula $\varphi$ has probability at least $r$".
In the same way, the intended meaning of formula $M_r \varphi$ is ``the formula $\varphi$ has probability at most $r$".

\begin{dfn}
Formulas of probability logic ($PL$)  is defined by the following grammar:
\begin{eqnarray*}
PL &:=& p\ |\ \neg \varphi\ |\ \varphi\wedge\varphi\ |\ \varphi\vee\varphi\ |\ L_r\varphi\ |\ M_r\varphi,
\end{eqnarray*}
where $p\in \mathcal{P}$ and $r\in\mathbb{Q}\cap[0,1]$.
\end{dfn}
In the following we also consider two fragments of probability logic, namely, \textit{basic probability logic} and \textit{positive probability logic}.

\begin{dfn} 	
	The basic probability logic ($BPL$) and respectively  positive  probability logic ($PPL$) are defined by the following grammars: 
  \begin{eqnarray*}
	 BPL &:=& p\ |\ \neg p\ |\ \varphi\wedge\varphi\ |\ \varphi\vee\varphi\ |\ L_r\varphi.\\
	PPL &:=& BPL \ |\ M_r\varphi.
\end{eqnarray*}
\end{dfn}

Note that in both $BPL$ and $PPL$ the negation only applies to elements of the set $\mathcal{P}$. Furthermore, $BPL$ is a proper fragment of $PPL$ in which applying the modal operators $L_r$ is only allowed.  Note that  $BPL\subsetneq PPL\subsetneq PL$.  
We also name these fragments as probability logics.

The other logical connectives $(\vee,\ \rightarrow,\ \leftrightarrow)$ have their standard definitions.

To interpret the formulas in $PL$ we have to define the notion of probability models.

For any measurable space $(\Omega,\mathcal{A})$ let $\Delta(\Omega, \mathcal{A})$ be the  measurable space of all $\sigma$-additive probability measures on $\Omega$ whose $\sigma$-algebra generated by the sets
\begin{center}
	$\{\mu \in \Delta(\Omega,\mathcal{A}) \ |\ \mu(E)\geq r \}$ for all $E\in \mathcal{A}$ and $r\in \mathbb{Q} \cap[0,1]$.
\end{center}

\begin{dfn}
	A  type space over  a   measurable space $(\Omega,\mathcal{A})$,  is a triple  $\mathfrak{M}=(\Omega, \mathcal{A}, T)$ where
	$T$ is a measurable function from $\Omega$ to $\Delta(\Omega, \mathcal{A})$.
\end{dfn}

$\Omega$ and $T$ are respectively called a \textit{set of states (or possible worlds)} and a \textit{type function}. It follows from the above definition, for $w\in\Omega$, $T(w)$ defines a probability measure on the $\sigma$-algebra $\mathcal{A}$. Furthermore, its measurability indicates that for each $E\in\mathcal{A}$ and  $r\in \mathbb{Q} \cap[0,1]$,  $$\{\omega\in \Omega\ |\  T(\omega,E)\geq r\}\in \mathcal{A}.$$ 

Type spaces are regarded as semantical devices for probability logics. 
We show that  probability logic and its fragments introduced above  have  different model theoretic features.

\begin{dfn}
A probability model is a tuple $\mathfrak{M}=(\Omega, \mathcal{A},T,v)$ where the triple $(\Omega, \mathcal{A},T)$ is a type space and $v:\mathcal{P}\rightarrow \mathcal{A}$ is a valuation function which assigns to each proposition $p\in\mathcal{P}$ the measurable set $v(p)\in\mathcal{A}$. 
\end{dfn}

\begin{dfn}
	A finitely additive probability model is a tuple $\mathfrak{M}=(\Omega, \mathcal{A}, T:\Omega\times \mathcal{A}\rightarrow [0,1], v:\mathcal{P}\rightarrow \mathcal{A})$ where
	\begin{itemize}
		\item $\mathcal{A}$ is an algebra over $\Omega\neq \emptyset$.
		
		\item For each $w\in \Omega$, $T(w)$ defines a finitely additive measure on $\mathcal{A}$. 	
		\item   $T$ is a measurable function, i.e. for each $E\in \mathcal{A}$ and 
		$r\in \mathbb{Q}\cap [0,1]$, $\{w\in\Omega\ |\  T(w)(E)\geq r\}\in\mathcal{A}$.
	\end{itemize}
\end{dfn}

	A model $\mathfrak{M}$ with a distinguished point $w\in\Omega$ is called a pointed probability model and denoted by $(\mathfrak{M},w)$.

Denote the class of pointed probability models by $\mathcal{PM}$ and respectively the class of finitely additive probability pointed models by $\mathcal{FPM}$. Notice that $\mathcal{PM}\subsetneq \mathcal{FPM} $.

\begin{dfn}
For a pointed (finitely additive) probability model $(\mathfrak{M},w)$ and a formula $\varphi\in PL$ the satisfaction relation $\mathfrak{M},w\models \varphi$ is defined inductively in the usual way for propositional variables and boolean connectives. For $L_r, M_r$ operators, if we assume $\llbracket\varphi\rrbracket_{\mathfrak{M}}= \{w\in \Omega \ |\ \mathfrak{M},w\models \varphi\}$, then 
$$\mathfrak{M},w\models L_r\varphi \ \ \text{ if and only if } \ \ T(w)(\llbracket\varphi\rrbracket_{\mathfrak{M}})\geq r$$
and 
$$\mathfrak{M},w\models M_r\varphi \ \ \text{ if and only if } \ \ T(w)(\llbracket\varphi\rrbracket_{\mathfrak{M}})\leq r.$$
\end{dfn}

We often omit the subscript $\mathfrak{M}$ from  $\llbracket\varphi\rrbracket_{\mathfrak{M}}$ and write $\llbracket\varphi\rrbracket$  when no confusion can arise.
Note that, by definition of $v$ and measurability of $T$, it is easy to see that $\llbracket\varphi\rrbracket\in\mathcal{A}$ for any formula $\varphi$. Notice  that for any formula $\varphi$ and $r\in\mathbb{Q}\cap [0,1]$, $\mathfrak{M},w\nmodels  L_r\varphi$ if and only if $\mathfrak{M},w\models  M_s\varphi$, for some $s<r$. A similar statement holds for $M_r \varphi$. This means that the negation of positive formulas can be defined by an infinite disjunction of positive formulas.

The other syntactical and semantical components of probability logics can be defined in the usual way. In particular, if $\mathcal{L}\in \{PL, BPL, PPL\}$ then any set of $\mathcal{L}$-formulas is called an $\mathcal{L}$-theory. Let $\mathcal{K}$ be a subclass of $\mathcal{FPM}$. An $\mathcal{L}$-theory $T$ is satisfiable in $\mathcal{K}$ if there exists a pointed model $(\mathfrak{M},w)\in \mathcal{K}$ such that $ \mathfrak{M},w\models \varphi$, for each $\varphi\in T$. Likewise, $T$ is finitely satisfiable in $\mathcal{K}$ if every finite subset of $T$ is satisfiable in $\mathcal{K}$.  An $\mathcal{L}$-theory $T$ is maximally satisfiable (respectively maximally finitely satisfiable) if it is maximal in the poset of satisfiable $\mathcal{L}$-theories (respectively finitely satisfiable $\mathcal{L}$-theories) ordered by inclusion relation. We say that the logic $\mathcal{L}$ has the compactness property with respect to class $\mathcal{K}$ if an $\mathcal{L}$-theory $T$ is satisfiable in $\mathcal{K}$ if and only if $T$ is finitely satisfiable in $\mathcal{K}$. 
In this paper the following results are established.

	\begin{enumerate}	
	\item  (Theorem \ref{bpl compact})	 $BPL$ has the compactness property with respect to $\mathcal{PM}$. %So any $BPL$-theory $T$ is satisfiable in $\mathcal{PM}$ if and only if  $T$ is finitely satisfiable in $\mathcal{PM}$. 
	
	\item (Theorem \ref{ppl compact})	 $PPL$ has the compactness property with respect to $\mathcal{FPM}$. 
\end{enumerate}

We also study the  L\"{o}wenheim-Skolem number of the class of probability and finitely additive probability models. Recall that the L\"{o}wenheim-Skolem number of a  class $\mathcal{K}$ of logic $\mathcal{L}$ is the least infinite cardinal $\kappa$ such that every satisfiable $\mathcal{L}$-theory  has a model of size at most $\kappa$.

	\begin{enumerate}
		\item  (Theorem \ref{LS PL})
		Let $\lambda$ be the L\"{o}wenheim-Skolem number of probability models with respect to probability logic. Then $\aleph_0 <\lambda \leq 2^{\aleph_0}$. 
		\item (Theorem \ref{LS fad})
		The L\"{o}wenheim-Skolem number of the class of finitely additive probability models with respect to probability logic is $\aleph_0$. 
	\end{enumerate}

\section{Compactness Property}

In this section we study the compactness property for $BPL$ and $PPL$.  

\subsection{Compactness for $BPL$ }\label{comp sec}
In this subsection we show that the basic probability logic, $BPL$, satisfies the compactness property with respect to $\mathcal{PM}$.  
It is known that  the compactness property does not hold for (full) probability logic. To see this consider the $PL$-theory
\begin{equation}
T=\{ L_{\frac{1}{2}-\frac{1}{4^{n+1}}}p\  |\ n\in \mathbb{N}\} \cup \{\neg L_{\frac{1}{2}}p\}.
\end{equation}
It is easy to show that  $T$ is finitely satisfiable but not satisfiable.

The failure of compactness in $PL$ is partly due to this fact that by using the negation one can express the strict inequality. However, as $BPL$ only applies the negation on propositions the above example is not a $BPL$-theory. 
In fact then restrict ourselves to $BPL$ we will see that the compactness holds in this logic.
To achieve this, we adapt the ultraproduct construction  for probability models and show that the \L o\'s theorem  holds for basic formulas, which means that $BPL$ enjoys the compactness property. We recall first some primary notions related to the ultraproduct construction.

Let $U$ be an ultrafilter over a non-empty set $I$ and $(a_i)_{i\in I}$ be a sequence of elements from $\mathbb{R}$, the set of real numbers. The $U$-limit of this sequence, denoted by $\lim_U a_i$,  is an element $r\in \mathbb{R}$ such that for every $\epsilon>0$ we have $\{ i\in I\ |\ |a_i-r|<\epsilon\}\in U$. It is known that each bounded sequence of elements of $\mathbb{R}$ has a unique limit over each ultrafilter. Furthermore, if $(a_i)_{i\in I}$ is  a bounded sequence of elements of $\mathbb{R}$ and $U$ is an ultrafilter over $I$ then
\begin{itemize}
\item  If $\{i\in I\ | \ a_i \geq r\} \in U$, then $\lim_U a_i\geq r$.
\item  If $\lim_U a_i> r$, then $\{ i\in I\ |\ a_i> r\}\in U$.
\item  $\lim_U a_i \geq r$ if and only if for every $r'<r$ we have $\{i\in I\ |\ a_i\geq r'\}\in U$.
\end{itemize}

For a family   $\langle \Omega_i : i\in I\rangle$ of sets indexed by $I$, let $\prod_{i\in I} \Omega_i$  
be  the Cartesian product of this family defined as the set $\prod_{i\in I} \Omega_i = \{(w_i)_{i\in I}\; |\; w_i\in \Omega_i\}$. Two elements $(w_i)_{i\in I}$ and $(v_i)_{i\in I}$ in $\prod_{i\in I} \Omega_i$ are $U$-equivalent, denoted by $(w_i)_{i\in I} \sim_U (v_i)_{i\in I}$, if $\{i\in I\; |\; w_i=v_i\}\in U$. Clearly $\sim_U$ defines an equivalence relation on $\prod_{i\in I}\Omega_i$. Let $(w_i)_U$ be the equivalence class of $(w_i)_{i\in I}$,  and the resulting $\prod_U \Omega_i$ be the set of all equivalence classes. 

Now suppose $(\Omega_i,\mathcal{A}_i,T_i)_{i\in I}$ is a family of type spaces.
Then for each sequence $(A_i)_{i\in I}$ with $A_i\in \mathcal{A}_i$ set  
$$(A_i)_U=\{(w_i)_U\in \prod_U\Omega_i\; |\; \{i\in I\; |\; w_i\in A_i\}\in U\}$$
 and let $\mathcal{A}=\{ \ (A_i)_U \ |\ A_i\in \mathcal{A}_i\}.$ It is easy to see that $\mathcal{A}$ forms a boolean algebra over $\prod_U \Omega_i$.
Now, define the function $T':\prod_U \Omega_i\times \mathcal{A}\rightarrow [0,1]$ as  follows:
$$T'((w_i)_U)((A_i)_U) = \lim_U T_i(w_i)(A_i).\;\;\;\;\;\;\;\;\;\;\  (*)$$

Note that since for each $(w_i)_{i\in I}$ the sequence $(T_i(w_i))_{i\in I}$ is  a bounded sequence of real numbers, the $U$-limit $\lim_U T_i(w_i)(A_i)$ exists. Moreover,  if $(w_i)_{i\in I} \sim_U (v_i)_{i\in I}$ then for each $(A_i)_U\in \mathcal{A}$ we have $\lim_U T_i(w_i)(A_i)=\lim_U T_i(v_i)(A_i)$.
Also 
$(A_i)_U=(B_i)_U$ implies that 
$\{i\in I\ |\ A_i=B_i\}\in U$.  
 Hence $(*)$ is well-defined.

\begin{lem}
	For each $(w_i)_U\in \prod_U \Omega_i$, the function $T'((w_i)_U)(.)$ is  a premeasure on the boolean algebra $\mathcal{A}$.
\end{lem}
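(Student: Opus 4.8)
The plan is to verify the three defining properties of a premeasure on the boolean algebra $\mathcal{A}$: that $T'((w_i)_U)(\emptyset)=0$, that $T'((w_i)_U)(\cdot)$ is finitely additive, and --- the substantive part --- that it is countably additive on those disjoint countable unions that happen to lie in $\mathcal{A}$. Throughout I fix $(w_i)_U$ and abbreviate $\mu:=T'((w_i)_U)(\cdot)$, using the well-definedness already noted together with the elementary facts that $(A_i)_U\cup(B_i)_U=(A_i\cup B_i)_U$, that $(A_i)_U\cap(B_i)_U=(A_i\cap B_i)_U$, that $(A_i)_U\subseteq(B_i)_U$ iff $\{i : A_i\subseteq B_i\}\in U$, and that $(A_i)_U=\emptyset$ iff $\{i : A_i=\emptyset\}\in U$. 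The empty set is represented by the constant sequence $A_i=\emptyset$, so $\mu(\emptyset)=\lim_U T_i(w_i)(\emptyset)=\lim_U 0=0$; likewise $\mu(\prod_U\Omega_i)=\lim_U 1=1$ and $0\le\mu\le 1$ throughout.

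For finite additivity, suppose $(A_i)_U$ and $(B_i)_U$ are disjoint. Then $(A_i\cap B_i)_U=\emptyset$, so $A_i\cap B_i=\emptyset$ for $U$-almost all $i$; for such $i$ the finite additivity of the measure $T_i(w_i)$ gives $T_i(w_i)(A_i\cup B_i)=T_i(w_i)(A_i)+T_i(w_i)(B_i)$. Since these equalities hold on a set in $U$ and the $U$-limit is additive on bounded sequences (immediate from its $\epsilon$-definition), passing to the limit yields $\mu((A_i\cup B_i)_U)=\mu((A_i)_U)+\mu((B_i)_U)$.

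The main obstacle is countable additivity, and I would reduce it to finite additivity by proving the following saturation-type claim: if $(E^{(n)})_{n\in\mathbb{N}}$ are pairwise disjoint members of $\mathcal{A}$ whose union $E$ also lies in $\mathcal{A}$, then $E^{(n)}=\emptyset$ for all but finitely many $n$. Granting this, the countable sum defining $\mu(E)$ collapses to a finite one and finite additivity (applied to the finitely many nonempty pieces) finishes the proof. To prove the claim I would argue by contradiction, assuming after relabelling that every $E^{(n)}=(A^{(n)}_i)_U$ is nonempty; replacing $A^{(n)}_i$ by $A^{(n)}_i\cap B_i$ (where $E=(B_i)_U$) I may assume $A^{(n)}_i\subseteq B_i$ for all $i,n$. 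Nonemptiness gives $S_n:=\{i : A^{(n)}_i\neq\emptyset\}\in U$, and pairwise disjointness gives $D_{mn}:=\{i : A^{(m)}_i\cap A^{(n)}_i=\emptyset\}\in U$.

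Here is where I expect to need an assumption on $U$: I would take $U$ to be countably incomplete, fixing a decreasing sequence $I=J_0\supseteq J_1\supseteq\cdots$ in $U$ with $\bigcap_n J_n=\emptyset$, and shrinking it so that $J_n\subseteq S_n\cap\bigcap_{m<l\le n}D_{ml}$ (still in $U$). Each $i\in J_0$ then has a well-defined largest index $n(i)$ with $i\in J_{n(i)}$, and since $i\in S_{n(i)}$ I may pick $w'_i\in A^{(n(i))}_i\subseteq B_i$; off $J_0$ choose $w'_i$ arbitrarily. On the one hand $\{i : w'_i\in B_i\}\supseteq J_0\in U$, so $(w'_i)_U\in E$; on the other hand, for each fixed $n$ and every $i\in J_{n+1}$ we have $n(i)>n$ and $i\in D_{n,n(i)}$, so $w'_i\in A^{(n(i))}_i$ is disjoint from $A^{(n)}_i$, whence $\{i : w'_i\notin A^{(n)}_i\}\supseteq J_{n+1}\in U$ and $(w'_i)_U\notin E^{(n)}$. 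Thus $(w'_i)_U\in E\setminus\bigcup_n E^{(n)}$, contradicting $E=\bigcup_n E^{(n)}$. This diagonal construction --- and the fact that it forces the hypothesis that $U$ is countably incomplete (for a countably complete nonprincipal $U$ the statement genuinely fails, as a point-mass example shows) --- is the delicate point; it is precisely the step that makes the internal finitely additive measure a premeasure, in the spirit of the Loeb measure construction, while everything else is routine.
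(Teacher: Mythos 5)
Your overall route is genuinely different from the paper's, and its core is correct. The paper proves the lemma through the continuity-at-$\emptyset$ criterion (Fact \ref{premeas equiv}): supposing a decreasing sequence $(A^j_i)_U$ in $\mathcal{A}$ with empty intersection whose $T'$-values stay above some $\epsilon>0$, it replaces the representatives by $U$-equivalent ones $B^j_i$ which decrease at each fixed coordinate, and then, by picking a point of $\bigcap_j B^j_i$ for every $i$ where that intersection is nonempty, it pushes the failure down to a single index $i\in I_0$, contradicting the $\sigma$-additivity of the factor measure $T_i(w_i)$. You instead prove an $\aleph_1$-saturation property of the algebra $\mathcal{A}$ --- a countable disjoint family in $\mathcal{A}$ whose union is again in $\mathcal{A}$ has only finitely many nonempty members --- and use it to collapse countable additivity to finite additivity, in the spirit of the Loeb construction. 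Your diagonal argument for that claim is correct (modulo the minor repair that the chain $J_n$ should be shrunk cumulatively, so that it stays decreasing). It is worth noting what each approach buys: your argument never uses $\sigma$-additivity of the factor measures $T_i(w_i)$, so it would prove the lemma even for finitely additive factors; the paper's argument uses that $\sigma$-additivity essentially, but in exchange it works for every ultrafilter $U$.

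The gap is precisely your hypothesis on $U$. The lemma and the construction preceding it are stated for an arbitrary ultrafilter, and the subsequent definition of the ultraproduct model asks only that $U$ be non-principal; your proof covers only countably incomplete $U$. Non-principal does not imply countably incomplete in ZFC: over a measurable cardinal there are countably complete non-principal ultrafilters. Moreover, your justification for dismissing that case --- that for a countably complete non-principal $U$ ``the statement genuinely fails'' --- is false if ``the statement'' means the lemma. For countably complete $U$ the conclusion of the lemma still holds: countable completeness lets the $U$-limit commute with countable sums (intersect the countably many disjointness sets to get $B_i=\bigcup_n A^{(n)}_i$ for $U$-almost all $i$; for each $i$ choose $N_i$ with $\sum_{n>N_i}T_i(w_i)(A^{(n)}_i)<\epsilon$, and use completeness to make $i\mapsto N_i$ constant on a set in $U$), and for principal $U$ the ultraproduct is isomorphic to a single factor, where $T'$ is outright $\sigma$-additive. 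What genuinely fails for countably complete non-principal $U$ is only your intermediate saturation claim: with all $\Omega_i=\mathbb{N}$, countable completeness forces every element of $\prod_U\Omega_i$ to be $U$-a.e.\ constant, so the infinitely many nonempty, pairwise disjoint sets $(\{n\})_U$ have union $(\mathbb{N})_U\in\mathcal{A}$. So in that case your reduction collapses while the lemma remains true, and a separate argument (such as the paper's) is required. This does not matter for the paper's application, since the ultrafilters used in the compactness proof may be taken countably incomplete, but as a proof of the lemma as stated your proposal is incomplete, and the claim that countable incompleteness is necessary for the lemma should be deleted.
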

\begin{proof}
	Fix $(w_i)_U\in \prod_U\Omega_i$.
	First note that
	$$T'((w_i)_U)(\emptyset)=\lim_U T_i(w_i)(\emptyset)=0.$$
	Now we have to show that whenever $\{(A^j_i)_U\ |\ j\in \mathbb{N}\}$ is a countable family of disjoint members of $\mathcal{A}$ if
	$\bigcup_{j\in \mathbb{N}} (A^j_i)_U\in\mathcal{A}$,
	then
	$$T'((w_i)_U)(\bigcup_{j\in \mathbb{N}} (A^j_i)_U)=\sum_{j\in \mathbb{N}} T'((w_i)_U)((A^j_i)_U).$$

		By Fact \ref{premeas equiv},  we prove that for each decreasing sequence
		$(A^0_i)_U\supseteq (A^1_i)_U\supseteq \dots$
		of elements of $\mathcal{A}$, if
		$\bigcap_{j} (A^j_i)_U=\emptyset$
		then 
		$\lim_{j\rightarrow\infty} T'((w_i)_U)(A^j_i)=0$.

		Now suppose on the contrary that 
		$\lim_{j\rightarrow\infty} T'((w_i)_U)(A^j_i)> 0$.
		So, there exists $\epsilon >0$ such that  for all $j\in\mathbb{N}$ we have
		$\lim_U T_i(w_i)(A^j_i)> \epsilon$.
		So 
		$I_j=\{i\in I\ |\ T_i(w_i)(A_i^j)\geq\epsilon\}\in U$,
		 for all $j\in\mathbb{N}$.
		
		Now we show that there is  a decreasing sequence $((B^j_i)_{j\in\mathbb{N}})_U$ such that
		$(A^j_i)_U=(B^j_i)_U$
		and for some $i\in I$,
		$\bigcap_j B_i^j=\emptyset$
		, while
		$T_i(w_i)(B_i^j)\geq \epsilon$, for each $j\in\mathbb{N}$.
		
		Since 
		$(A_i)_U^j\supseteq (A_i)_U^{j+1}$,
		it follows that 
		$\{i\in I\ |\ A_i^j\supseteq A_i^{j+1}\}\in U$, for each $j\in\mathbb{N}$. 
		
		Thus for each
		$j\in\mathbb{N}$
		we have
		$$S_j = \{i\in I\ |\ A_i^0\supseteq A_i^j\} \cap \{i\in I\ |\ T_i(w_i)(A_i^j)\geq \epsilon\}\in U.$$
		Now define the sets $B_i^j$ as follows:
		$$B_i^j=\left\{
		\begin{array}{ll}
		A_i^0 & \text{ if } j=0 \text{ or }  i\not\in S_j \text{ and } k=min_n(i\in S_n),\\
		A_i^j & \text{ if } j>0 \text{ and } i\in S_j.
		\end{array}
		\right.$$
		Hence, $\{i\in I\ |\ B_i^j\supseteq B_i^{j+1}\}\in U$ for each  $j\in\mathbb{N}$. Also,
		$T_i(w_i)(B^j_i)\geq\epsilon$
		for each  $j\in\mathbb{N}$ and each  $i\in I_0$.
		Furthermore, by definition of $B_i^j$s we have
		$(B^j_i)_U=(A^j_i)_U$.
		So $\bigcap_j (B^j_i)_U=\emptyset$.
		Since  $I_0\in U$, there  exists  $i\in I_0$ such that
		$\bigcap_j B_i^j=\emptyset$.
		But this contradicts  $T_i(w_i)(B_i^j)\geq\epsilon$. 
\end{proof}

Therefore, for each $(w_i)_U\in \prod_U \Omega_i$, the function $T'((w_i)_U)(.)$ is  a premeasure on $\mathcal{A}$.  So by Fact \ref{caratheo}, it could be  extended to the measure $T((w_i)_U)(.)$ on $\mathcal{A}_U=\sigma(\mathcal{A})$.
Now to prove that $T$ is a type function on $(\prod_U \Omega_i, \mathcal{A}_U)$ we have to show that it is measurable, i.e.
$$\{(w_i)_U\ |\ T((w_i)_U)(E)\geq\alpha\}\in\mathcal{A}_U,$$
for each $E\in\mathcal{A}_U$  and $\alpha\in\mathbb{Q}\cap [0,1]$.

\begin{lem}\label{meas ultrapro}
	$T$ is a measurable function.
\end{lem}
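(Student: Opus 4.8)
The plan is to prove measurability by a two-step ``good sets'' argument. For $E\in\mathcal{A}_U$ write $F_E$ for the function $(w_i)_U\mapsto T((w_i)_U)(E)$ on $\prod_U\Omega_i$; the claim to be established is exactly that every $F_E$ is $\mathcal{A}_U$-measurable. Let $\mathcal{G}$ be the collection of all $E\in\mathcal{A}_U$ for which $F_E$ is $\mathcal{A}_U$-measurable. Since $\mathcal{A}$ is a Boolean algebra generating $\mathcal{A}_U=\sigma(\mathcal{A})$, it suffices to show that (i) $\mathcal{A}\subseteq\mathcal{G}$ and (ii) $\mathcal{G}$ is a monotone class; the Monotone Class Theorem then forces $\mathcal{G}=\sigma(\mathcal{A})=\mathcal{A}_U$, which is the assertion.

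For (i), fix $E=(A_i)_U\in\mathcal{A}$. On $\mathcal{A}$ the extended measure agrees with the premeasure $T'$, so $F_E((w_i)_U)=\lim_U T_i(w_i)(A_i)$. I would then use the recorded property of $U$-limits that $\lim_U a_i\geq\alpha$ iff $\{i\ |\ a_i\geq\beta\}\in U$ for every $\beta<\alpha$ (density of $\mathbb{Q}$ lets one restrict $\beta$ to rationals, since a superset of a member of $U$ is again in $U$). This yields, for each $\alpha\in\mathbb{Q}\cap[0,1]$,
$$\{(w_i)_U\ |\ F_E((w_i)_U)\geq\alpha\}=\bigcap_{\beta\in\mathbb{Q}\cap[0,1],\ \beta<\alpha}(C_i^\beta)_U,\qquad C_i^\beta:=\{w\in\Omega_i\ |\ T_i(w)(A_i)\geq\beta\}.$$
By measurability of each $T_i$ we have $C_i^\beta\in\mathcal{A}_i$, hence $(C_i^\beta)_U\in\mathcal{A}\subseteq\mathcal{A}_U$, and the right-hand side is a countable intersection of members of $\mathcal{A}_U$, so it lies in $\mathcal{A}_U$. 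Thus $F_E$ is measurable and $E\in\mathcal{G}$.

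For (ii), suppose $E_n\uparrow E$ in $\mathcal{A}_U$ with each $E_n\in\mathcal{G}$. Since every $T((w_i)_U)(\cdot)$ is a finite $\sigma$-additive probability measure, continuity from below gives $F_E=\sup_n F_{E_n}$ pointwise; symmetrically, for $E_n\downarrow E$, continuity from above (legitimate because the total mass is $1$) gives $F_E=\inf_n F_{E_n}$. In either case $F_E$ is a pointwise monotone limit of $\mathcal{A}_U$-measurable functions and is therefore $\mathcal{A}_U$-measurable, so $E\in\mathcal{G}$. Hence $\mathcal{G}$ is a monotone class, and the argument is complete.

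The only genuinely delicate point is step (i): one must convert the threshold condition on the $U$-limit into a \emph{countable} family of conditions, each recognizable inside the ultraproduct through the measurable sets $C_i^\beta$ supplied by the measurability of the individual $T_i$. Everything afterward — that the Carath\'eodory extension agrees with $T'$ on $\mathcal{A}$, and the monotone-class bookkeeping (where finiteness of the measures is what permits continuity from above) — is routine measure theory.
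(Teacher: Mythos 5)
Your proposal is correct and takes essentially the same route as the paper's proof: a good-sets argument in which the collection of sets $E$ with measurable threshold sets is shown to contain the algebra $\mathcal{A}$ (by converting the condition $\lim_U T_i(w_i)(A_i)\geq\alpha$ into the countable intersection $\bigcap_{\beta<\alpha}(C_i^\beta)_U$ with $(C_i^\beta)_U\in\mathcal{A}$, exactly as the paper does) and to be a monotone class, after which the Monotone Class Theorem finishes the argument. If anything, your write-up is slightly more complete than the paper's: the paper only verifies closure under increasing unions before declaring its collection $\mathcal{B}$ a monotone class, whereas you also check decreasing intersections (using continuity from above, valid since the measures are finite), which is what the monotone-class hypothesis actually requires.
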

\begin{proof}

	Let $$\mathcal{B}=\{E\in\sigma(\mathcal{A}) \ |\ \text{for each}\  r\in\mathbb{Q}\cap [0,1],\ \{ w \ |\  T(w)(E)\geq r\}\in\sigma(\mathcal{A})\}.$$ First we show that if $E\in\mathcal{A}$, then $E\in \mathcal{B}$. In this situation, there are
	$E_i\in\mathcal{A}_i$ such that $E=(E_i)_U$.
	So we have
	\begin{align*}
	\{(w_i)_U\ |\ T((w_i)_U)(E)\geq\alpha\} &= \{(w_i)_U\ |\ \lim_U T_i(w_i)(E_i)\geq\alpha\}\\
	&= \{(w_i)_U\ |\ \forall \alpha'<\alpha , \alpha'\in\mathbb{Q}\cap[0,1], \{i\in I\ |\  T_i(w_i)(E_i)\geq\alpha'\}\in U\}\\
	&= \bigcap_{\substack{{\alpha'<\alpha}\\ {\alpha'\in\mathbb{Q}\cap[0,1]}}} \{(w_i)_U\ |\ \{i\in I\ |\ T_i(w_i)(E_i)\geq\alpha'\}\in U\}.
	\end{align*}
	On the other hand for each
	$\alpha'\in\mathbb{Q}\cap[0,1]$ with $\alpha'<\alpha$,
	$$\{(w_i)_U\ |\ \{i\in I\ |\ T_i(w_i)(E_i)\geq \alpha'\}\in U\}=(A_i)_U$$
	where $A_i=\{w_i\ |\ T_i(w_i)(E_i)\geq \alpha'\}$, for each $i\in I$.
	Hence
	$(A_i)_U\in\mathcal{A}$
	and therefore,
	$$\bigcap_{\substack{{\alpha'<\alpha}\\ {\alpha'\in\mathbb{Q}\cap[0,1]}}} \{(w_i)_U\ |\ \{i\in I\ |\ T_i(w_i)(E_i)\geq\alpha'\}\in U\} \in \mathcal{A}_U.$$
    Next we show that if  $E$ is a union of an increasing sequence $E_1\subseteq E_2\subseteq \dots$ of elements of $\mathcal{B}$ then $E\in\mathcal{B}$. 

	Assume that $E_1\subseteq E_2\subseteq \dots$ such that
	$E_j\in \mathcal{A}_U$ and  $E=\bigcup_j E_j$
	and the claim is true for each $E_j$.
	\begin{align*}
	H= \{(w_i)_U\ |\ T((w_i)_U)(E)\geq\alpha\} &= \{(w_i)_U\ |\ T((w_i)_U)(\bigcup_j E_j)\geq\alpha\}\\
	&= \{(w_i)_U\ |\ \lim_{j\rightarrow\infty} T((w_i)_U)(E_j)\geq\alpha\}\\
	&= \{(w_i)_U\ |\ \forall\alpha'<\alpha\ \exists j\ T((w_i)_U)(E_j)\geq \alpha'\}\\
	&= \bigcap_{\substack{{\alpha'<\alpha}\\ {\alpha'\in\mathbb{Q}\cap[0,1]}}} \bigcup_{j=1}^\infty \{(w_i)_U\ |\ T((w_i)_U)(E_j)\geq\alpha'\}.
	\end{align*}
	By induction hypothesis for each $j$ and each  $\alpha'<\alpha$  we have
	$\{(w_i)_U\ |\ T((w_i)_U)(E_j)>\alpha'\}\in \mathcal{A}_U$. Since $\mathcal{A}_U$ is a $\sigma$-algebra, it follows that $H\in \mathcal{A}_U$.
	
	Therefore, $\mathcal{B}$ is a monotone class which includes  the algebra $\mathcal{A}$.
 	So by Fact \ref{monotone class}, $\mathcal{B}=\sigma(\mathcal{A})$ and the proof is complete.
\end{proof}

 Based on the above lemmas, we define ultraproduct  of probability models. 
\begin{dfn}
	Let $\langle\mathfrak{M}_i=(\Omega_i,\mathcal{A}_i,T_i,v_i)\, :\,  i\in I\rangle$ be a family of probability models and $U$ be a non-principal ultrafilter over $I$. The ultraproduct of the family of probability models $\langle\mathfrak{M}_i:\ i\in I\rangle$ over $U$ is a model
	$\mathfrak{M}= \prod_U \mathfrak{M}_i=(\Omega_U,\mathcal{A}_U,T_U,v_U)$ where
	\begin{itemize}
		\item $\Omega_U,\; \mathcal{A}_U$ and $T_U$ are defined as above.
	
		\item $(w_i)_U\in v_U(p)$ if and only if $\{ i\in I\ |\ w_i\in v_i(p)\}\in U$.
	\end{itemize}
\end{dfn}

To ease the notation for each formula $\varphi$ we use $\llbracket\varphi\rrbracket_U$ instead of $\llbracket\varphi\rrbracket_{ \prod_U \mathfrak{M}_i}$.

The following theorem gives a weak version of  the \L o\'s theorem  for basic probability logic. 

\begin{thm}
	Let  $\langle\mathfrak{M}_i\, :\, i\in I\rangle$ be a family of probability models and $U$ be a non-principal ultrafilter over $I$. Suppose $\varphi$ is a basic formula. Then
		$\{i\in I\; |\; \mathfrak{M}_i,w_i\models\varphi\}\in U$ implies $\prod_U\mathfrak{M}_i,(w_i)_U\models\varphi$.
\end{thm}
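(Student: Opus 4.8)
The plan is to prove the statement by induction on the structure of the basic formula $\varphi$, exploiting the one-directional nature of the claim (membership in $U$ of the satisfaction index set \emph{implies} satisfaction in the ultraproduct, but not conversely). The base cases are propositional: for $\varphi = p$, the hypothesis $\{i \mid \mathfrak{M}_i, w_i \models p\} = \{i \mid w_i \in v_i(p)\} \in U$ is precisely the definition of $(w_i)_U \in v_U(p)$, so $\prod_U \mathfrak{M}_i, (w_i)_U \models p$. The case $\varphi = \neg p$ is handled symmetrically using that $U$ is an ultrafilter, so $\{i \mid w_i \notin v_i(p)\} \in U$ iff $\{i \mid w_i \in v_i(p)\} \notin U$.

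For the boolean connectives I would proceed as follows. If $\varphi = \psi \wedge \chi$, then $\{i \mid \mathfrak{M}_i, w_i \models \varphi\} = \{i \mid \mathfrak{M}_i, w_i \models \psi\} \cap \{i \mid \mathfrak{M}_i, w_i \models \chi\}$; since $U$ is closed under intersection and supersets, membership of the left-hand set in $U$ forces both component sets into $U$, and the induction hypothesis applied to each conjunct gives the result. The case $\varphi = \psi \vee \chi$ uses that the index set is the union of the two component sets, and the ultrafilter property guarantees that if a union lies in $U$ then at least one of the two sets does; applying the induction hypothesis to that disjunct suffices. Here the one-directional formulation is what keeps the argument clean, since we never need to pull satisfaction in the ultraproduct back down to the factors.

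The essential step is the modal case $\varphi = L_r \psi$, and this is where I expect the main obstacle to lie. Suppose $\{i \mid \mathfrak{M}_i, w_i \models L_r \psi\} \in U$, i.e. $\{i \mid T_i(w_i)(\llbracket \psi \rrbracket_{\mathfrak{M}_i}) \geq r\} \in U$. Using the first bullet among the recalled $U$-limit facts, this gives $\lim_U T_i(w_i)(\llbracket \psi \rrbracket_{\mathfrak{M}_i}) \geq r$. The difficulty is that the definition of $T_U$ evaluates the limit on sets of the form $(A_i)_U$ from the boolean algebra $\mathcal{A}$, so I must relate the factorwise truth set $\llbracket \psi \rrbracket_{\mathfrak{M}_i}$ to the ultraproduct truth set $\llbracket \psi \rrbracket_U$. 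The key sublemma I would establish (by the same induction) is that $(\llbracket \psi \rrbracket_{\mathfrak{M}_i})_U \subseteq \llbracket \psi \rrbracket_U$ as subsets of $\prod_U \Omega_i$, reflecting that the induction hypothesis is itself one-directional. Because this containment may be proper, I cannot simply assert $T_U((w_i)_U)(\llbracket \psi \rrbracket_U) = \lim_U T_i(w_i)(\llbracket \psi \rrbracket_{\mathfrak{M}_i})$; instead I use monotonicity of the measure $T_U((w_i)_U)(\cdot)$ together with the containment to obtain
\[
T_U((w_i)_U)(\llbracket \psi \rrbracket_U) \geq T_U((w_i)_U)\big((\llbracket \psi \rrbracket_{\mathfrak{M}_i})_U\big) = \lim_U T_i(w_i)(\llbracket \psi \rrbracket_{\mathfrak{M}_i}) \geq r,
\]
where the middle equality is the defining formula $(*)$ for $T_U$ on members of $\mathcal{A}$. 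This chain yields $\prod_U \mathfrak{M}_i, (w_i)_U \models L_r \psi$ and completes the induction.

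I would emphasize that the monotonicity-plus-containment maneuver is exactly why only the implication, and not the full biconditional of \L o\'s theorem, holds for $L_r$: the strict increase from $(\llbracket \psi \rrbracket_{\mathfrak{M}_i})_U$ to $\llbracket \psi \rrbracket_U$ can only raise the measure, which is harmless for a lower bound $L_r$ but would be fatal for an upper bound $M_r$. This observation both drives the proof of the modal step and explains structurally why $BPL$ (with only $L_r$) is the right fragment for a one-directional \L o\'s theorem.
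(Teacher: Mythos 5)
Your proposal is correct and follows essentially the same route as the paper: an induction on basic formulas whose content is precisely the containment $(\llbracket\varphi\rrbracket_{\mathfrak{M}_i})_U\subseteq\llbracket\varphi\rrbracket_U$ (with equality at literals), with the $L_r$ step handled by the $U$-limit fact, the defining formula $(*)$ for $T_U$ on $\mathcal{A}$, and monotonicity of the measure applied to the containment given by the induction hypothesis. Your explicit remark on why monotonicity is harmless for $L_r$ but would fail for $M_r$ is exactly the point the paper makes, only stated more sharply.
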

\begin{proof}
	By induction on the complexity of basic formulas one can show  that for each basic  formula $\varphi$ we have
	$$(\llbracket\varphi\rrbracket_{\mathfrak{M}_i})_U\subseteq \llbracket\varphi\rrbracket_U.$$
	In fact, by definition, for the  atomic formulas and their negations  we have
	$(\llbracket\varphi\rrbracket_{\mathfrak{M}_i})_U= \llbracket\varphi\rrbracket_U$.
	
	It  is also easy to prove the induction step for the  boolean connectives $\wedge$ and $\vee$. Now knowing the induction hypothesis for basic formula $\varphi$ we have the  followings: 
	\begin{align*}
	(w_i)_U\in (\llbracket L_r\varphi\rrbracket_{\mathfrak{M}_i} )_U &\Rightarrow \{ i\in I\ |\ w_i\in \llbracket L_r\varphi\rrbracket_{\mathfrak{M}_i}  \}\in U\\
	&\Rightarrow \{i\in I\ |\ T_i(w_i)(\llbracket\varphi\rrbracket_{\mathfrak{M}_i}) \geq r \} \in U\\
	&\Rightarrow \lim_U T_i(w_i)(\llbracket \varphi\rrbracket_{\mathfrak{M}_i}) \geq r\\
	&\Rightarrow T((w_i)_U)((\llbracket \varphi\rrbracket_{\mathfrak{M}_i})_U) \geq r\\
	&\Rightarrow T((w_i)_U)(\llbracket \varphi\rrbracket_U) \geq r\\
	&\Rightarrow (w_i)_U\in \llbracket L_r\varphi\rrbracket_U.
	\end{align*}
Note that  the fifth line is obtained  from the fourth line by the induction hypothesis.
\end{proof}

The above one directional statement is mainly due to the fundamental fact that $\lim_U(a_i)\geq r$ does not imply that $\{i\in I:\  a_i\geq r\}\in U$ . However the above theorem still enables us to prove the compactness theorem for basic probability logic.  

\begin{thm}[$BPL$-Compactness]\label{bpl compact}
	Suppose that $\Gamma$ is a $BPL$-theory. Then $\Gamma$ is satisfiable in $\mathcal{PM}$ if and only if it is finitely satisfiable.
\end{thm}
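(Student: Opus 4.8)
The plan is to prove the nontrivial direction — finite satisfiability implies satisfiability — by an ultraproduct argument built on the one-directional \L o\'s theorem just established; the forward direction is immediate, since any model of $\Gamma$ is a model of each of its finite subsets. So suppose $\Gamma$ is finitely satisfiable in $\mathcal{PM}$. If $\Gamma$ is finite there is nothing to prove, so I assume $\Gamma$ is infinite. I would let $I$ be the collection of all finite subsets of $\Gamma$, and, using finite satisfiability, fix for each $\gamma\in I$ a pointed model $(\mathfrak{M}_\gamma,w_\gamma)\in\mathcal{PM}$ with $\mathfrak{M}_\gamma,w_\gamma\models\psi$ for every $\psi\in\gamma$.

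The next step is to choose the ultrafilter. For each $\varphi\in\Gamma$ set $\hat{\varphi}=\{\gamma\in I\ |\ \varphi\in\gamma\}$. The family $\{\hat{\varphi}\ |\ \varphi\in\Gamma\}$ has the finite intersection property, because $\{\varphi_1,\dots,\varphi_n\}\in\hat{\varphi_1}\cap\cdots\cap\hat{\varphi_n}$, so it extends to an ultrafilter $U$ over $I$. Since $\Gamma$ is infinite, no finite $\gamma_0$ contains all of $\Gamma$, hence no principal ultrafilter concentrated at a single $\{\gamma_0\}$ can contain every $\hat{\varphi}$; therefore $U$ is automatically non-principal, exactly as required by the preceding theorem.

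I would then form the ultraproduct $\mathfrak{M}=\prod_U\mathfrak{M}_\gamma$ with distinguished point $(w_\gamma)_U$. By the premeasure lemma together with Fact \ref{caratheo} (Carath\'eodory extension) and Lemma \ref{meas ultrapro}, each $T((w_i)_U)(.)$ extends to a genuine $\sigma$-additive measure and $T$ is measurable, so $(\mathfrak{M},(w_\gamma)_U)\in\mathcal{PM}$. Finally, fix $\varphi\in\Gamma$: every $\gamma$ with $\varphi\in\gamma$ satisfies $\mathfrak{M}_\gamma,w_\gamma\models\varphi$, whence $\hat{\varphi}\subseteq\{\gamma\in I\ |\ \mathfrak{M}_\gamma,w_\gamma\models\varphi\}$; since $\hat{\varphi}\in U$ and $U$ is upward closed, the larger set also lies in $U$. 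The weak \L o\'s theorem then gives $\mathfrak{M},(w_\gamma)_U\models\varphi$, and as $\varphi\in\Gamma$ was arbitrary, $(\mathfrak{M},(w_\gamma)_U)$ is the desired model of $\Gamma$.

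The delicate point is the one-directionality of the \L o\'s theorem available to us: it transfers satisfaction only \emph{upward}, from ``the index set belongs to $U$'' to ``the ultraproduct satisfies $\varphi$,'' and never the converse. This is precisely what suffices here, since $\Gamma$ is a $BPL$-theory whose formulas are positive, so only upward transfer of satisfaction is ever needed; the whole force of the argument is arranging, via the finite intersection property, that the relevant index sets $\hat{\varphi}$ land in $U$. I expect the only genuine verification beyond this bookkeeping is that the ultraproduct really belongs to $\mathcal{PM}$, which is exactly what the earlier lemmas on premeasures, extension, and measurability secure.
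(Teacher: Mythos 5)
Your proposal is correct and takes essentially the route the paper intends: the paper states Theorem \ref{bpl compact} without writing out a proof, relying on the weak \L o\'s theorem just established, and your argument---indexing by the finite subsets of $\Gamma$, extending the family of sets $\hat{\varphi}$ (which has the finite intersection property) to an ultrafilter, and transferring satisfaction upward into the ultraproduct---supplies exactly the intended details. Your two explicit verifications, that the ultrafilter is automatically non-principal when $\Gamma$ is infinite (as required by the paper's definition of the ultraproduct) and that only the upward direction of the weak \L o\'s theorem is ever invoked, are precisely the points that need checking.
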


We conclude this subsection by giving an example which shows that the compactness fails even for positive probability logic. So, even by avoiding negation and mixing $L_r$ and $M_s$ operators we could find a theory $\Gamma$ which is finitely satisfiable but not satisfiable.

\begin{exm}\label{exm pr}
	Let
	$$\Sigma=\{\ M_0(M_0p\vee L_1p)\ \}\cup \{\ M_{\frac{1}{2}}(L_{\frac{1}{2^i}}p \wedge M_{1-\frac{1}{2^i}}p) \ |\ i\in \mathbb{N}\}.$$
	We show that $\Sigma$  is finitely satisfiable but it is not satisfiable in any probability model.
	
	For each probability model $\mathfrak{M}$, Put
	$$A_0=\{w\in \Omega\ |\ T(w)(\llbracket p\rrbracket)=0\} \ \ \ \text{and} \ \ \ A_0'=\{w\in \Omega\ |\ T(w)(\llbracket p\rrbracket)=1\}.$$
	Also for each $i\in \mathbb{N}$, let
	$$A_i=\{w\in \Omega\ |\ \frac{1}{2^i}\leq T(w)(\llbracket p\rrbracket)\leq 1-\frac{1}{2^i}\}.$$
	For each finite subset $\Sigma'\subseteq \Sigma$, suppose $k$ is the greatest index $i$ such that  $M_{\frac{1}{2}}(L_{\frac{1}{2^i}}p \wedge M_{1-\frac{1}{2^i}}p)\in \Sigma'$.
	Let $\mathfrak{M}$ be a probability model
	$(\Omega=\{w_1,w_2\},\mathcal{P}(\Omega),T,v)$
	such that $v(p)=\{w_1\}$ and
	
\begin{itemize}	
\item 	$T(w_1)(\{w_1\})=T(w_1)(\{w_2\})=\frac{1}{2},$
\item  $0< T(w_2)(\{w_1\}) < \frac{1}{2^k},$
\item  $T(w_2)(\{w_2\})=1-T(w_2)(\{w_1\}).$
	
\end{itemize}	
Therefore, $\mathfrak{M},w_1\models\Sigma'$.

	However, $\Sigma$ is not satisfiable, since otherwise, if $\mathfrak{M},w\models\Sigma$ we have $T(w)(A_0)=T(w)(A_0')=0$ and $T(w)(A_i)\leq \frac{1}{2}$, for all $i\in\mathbb{N}$. As $T(w)$ is a $\sigma$-additive probability measure and $A_1\subseteq A_2\subseteq \dots$, we have
	$T(w)(\bigcup_i A_i)=\lim_{i\rightarrow \infty} T(w)(A_i)\leq \frac{1}{2}$. On the other hand,
	$\Omega=A_0 \cup A_0' \cup \bigcup_{i=1}^\infty A_i = A_0 \cup A_0' \cup \{w'\in \Omega\ |\ 0<T(w')(\llbracket p\rrbracket)<1\}$.
	So, 	$T(w)(\Omega)=T(w)(A_0)+T(w)(A_0')+T(w)(\bigcup_{i=1}^{\infty}A_i)<1$, a contradiction.
\end{exm}

In subsection \ref{compactness sec}, we show that in the above example $\Sigma$ has a finitely additive probability model. In fact when working with the finitely additive probability models the compactness property holds for positive probability logic $PPL$.

\subsection{The Compactness for $PPL$ }\label{compactness sec}

In this subsection we prove that $PPL$ has the compactness property with respect to the class of finitely additive probability models.

As we noted, Example \ref{exm pr} shows that when working with probability models the compactness property fails for $PPL$. So, it is not possible to adapt the ultraproduct construction for finitely additive probability models to prove the $PPL$-compactness. However, we will see that the Henkin method can be implemented to prove this property for finitely additive probability models.
  
In the following we show that the $PPL$-theory $\Sigma$ given in the Example \ref{exm pr} is satisfiable in $\mathcal{FPM}$, i.e. there exists a finitely additive pointed model $\mathfrak{M},w\models \Sigma$.

\begin{exm}
Let $\Sigma$ be a $PPL$-theory as given in Example \ref{exm pr}. 
Define the model  $\mathfrak{M}=(\mathbb{N}, \mathcal{P}(\mathbb{N}), T, v)$ which satisfies the following conditions: 
\begin{itemize}
\item $v(p)=\{0\}$. 
\item  For each $n\neq 0$, $T(n)$ is a $\sigma$-additive measure on $\mathcal{P}(\mathfrak{N})$ with the condition
	\[  T(n)(\{x\})= \left\{
	\begin{array}{ll}
	\frac{1}{2^n} & \text{ if } 0\leq x\leq 2^n-1, \\
	0 & \text{ if } x> 2^n-1. 
	\end{array}
	\right. \]
	
\item For a non-principal ultrafilter $U$ over $\mathbb{N}$, we define $T(0)$ as:
	\[  T(0)(X)= \left\{
	\begin{array}{ll}
	1 & \text{ if } X\in U, \\
	0 & \text{ if }  X\not\in U.
	  \end{array}
	\right. \]
	It is easy to see that $T(0)$ is a finitely additive probability measure.  Note that for every $i\neq0$,
	$\{w\in \mathbb{N}\ |\ \frac{1}{2^i}\leq T(w)(\llbracket p\rrbracket)\leq 1-\frac{1}{2^i}\}=\{1,\dots,i\}\nin U$. So for the point $0$ we have

\end{itemize}
	\begin{align*}
	T(0)(\llbracket p\rrbracket) & = 0,\\
	T(0)(\llbracket M_0 p\rrbracket) & = T(0)(\{0\}) = 0,\\
	T(0)(\llbracket L_1 p\rrbracket) &= T(0)(\emptyset) =0,\\
	T(0)(\llbracket L_{\frac{1}{2^i}}p \wedge M_{1-\frac{1}{2^i}}p\rrbracket) &= 
	T(0)(\{1,\dots,i\})= 0,\\
	T(0)(\{n\ |\ 0<T(n)(\llbracket p\rrbracket)<1 \}) &= T(0)(\mathbb{N}\setminus\{0\}) = 1.
	\end{align*}
Therefore, $\mathfrak{M},0\models\Sigma$.
\end{exm}

To prove  the compactness property for $PPL$-theories, 
 we construct a canonical finitely additive probability model  $\mathfrak{M}_{C}=(\Omega_C,\mathcal{P}(\Omega_C),T_C,v_C)$ whose set of states $\Omega_C$ consists of all maximally finitely satisfiable positive theories such that its satisfaction relation is given by 
$\mathfrak{M}_C, \Gamma\models \varphi$ if and only if $\varphi\in\Gamma$, for each $\Gamma\in\Omega_C$ and positive formula $\varphi$. Lemma \ref{lemwp} is the key  ingredient in proving the truth lemma and is based on the following known fact which relates the satisfiability of a formula $\varphi$ with solvability of certain  finite system of  linear inequalities $S_{\varphi}$, \cite{fagin:logic90,zhou:intui11}.  

Before defining the canonical model we need to recall some basic notions. The (modal or) probability depth of a formula $\varphi$ is the maximum number of nesting probability operators used in $\varphi$. More formally,  
\begin{dfn}
	\begin{itemize}
		\item $\delta(p)=0$, for each atomic formula $p$.
		\item $\delta(\neg\varphi)=\delta(\varphi)$.
		\item $\delta(\varphi\wedge\psi) = \delta(\varphi\vee \psi)=\max(\delta(\varphi),\delta(\psi))$.
		\item $\delta(L_r\varphi)=\delta(M_s \varphi)=\delta(\varphi)+1$.
	\end{itemize}
\end{dfn}

For a formula $\varphi$  , let local language $\mathcal{L}_{\varphi}$ be the largest set of formulas satisfying the following conditions:
\begin{itemize}
	\item The propositional variables are those occur in $\varphi$.
	\item  Each $r\in\mathbb{Q}\cap [0,1]$ appears in a probability operators of a formula in $\mathcal{L}_{\varphi}$  is a multiple of $\frac{1}{q_{\varphi}}$ where $q_{\varphi}$ is the least common multiple of all denominators of the rational numbers appearing in probability operators in $\varphi$.
	\item The depth of formulas in $\mathcal{L}_{\varphi}$ is at most $\delta(\varphi)$.
	
\end{itemize}
It is clear that up to logical equivalence the local language $\mathcal{L}_{\varphi}$ consists of only finitely many formulas.

Now we state the following fact. We refer the reader to \cite{zhou:intui11}, Theorem 3, for the construction of $S_{\varphi}$ as well as its complete proof. We should only point out whenever $\varphi$ is a positive probability  formula, the set $S_{\varphi}$ consists of only closed inequalities, i.e. linear inequalities of form $x_1+\dots+ x_n\geq r$ or $x_1+\dots+ x_n\leq s$. 

\begin{fact}\label{system}
	For any probability formula $\varphi$ there is a system of linear inequalities $S_\varphi$ such that  $\varphi$ is satisfiable if and only if $S_\varphi$ is solvable.
\end{fact}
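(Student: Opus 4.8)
The plan is to build $S_\varphi$ by induction on the probability depth $\delta(\varphi)$, following the classical reduction of probabilistic satisfiability to linear programming. The key structural fact I would lean on is that, up to logical equivalence, the local language $\mathcal{L}_\varphi$ is finite: this guarantees that at every stage only \emph{finitely} many real unknowns and finitely many inequalities are introduced, so $S_\varphi$ is a genuinely finite system. For the base case $\delta(\varphi)=0$ the formula $\varphi$ is purely propositional, ``satisfiable'' means propositionally consistent, and I would take $S_\varphi$ to be a trivial system that is solvable precisely when $\varphi$ is consistent, anchoring the induction.

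For the inductive step I would first isolate the subformulas $\psi_1,\dots,\psi_k$ that occur immediately beneath a top-level $L_r$ or $M_s$ in $\varphi$; each $\psi_j$ has depth strictly below $\delta(\varphi)$, so the systems $S_{\psi_j}$ are already available. Next I consider the finitely many \emph{atoms} $A$, namely the maximal consistent conjunctions $\pm\psi_1\wedge\cdots\wedge\pm\psi_k$ paired with a complete choice of truth values for the propositional variables occurring in $\varphi$. I introduce one unknown $x_A\ge 0$ per atom, intended to record the mass $T(w)(\llbracket A\rrbracket)$ that a putative world $w$ assigns to the states realizing $A$, together with the normalization $\sum_A x_A=1$. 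A top-level constraint $L_r\psi_j$ then becomes the closed inequality $\sum_{A\ni\psi_j} x_A\ge r$ and $M_s\psi_j$ becomes $\sum_{A\ni\psi_j} x_A\le s$, while the Boolean skeleton of $\varphi$ (treating each $L_r\psi_j$, $M_s\psi_j$ and each propositional literal as an atomic proposition) is handled by passing to disjunctive normal form and producing one such linear system per disjunct, so that ``$S_\varphi$ is solvable'' is read as: some disjunct's system admits a nonnegative solution.

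The delicate point, and the heart of the argument, is coupling these depth-$\delta(\varphi)$ inequalities to the lower-depth satisfiability encoded by the $S_{\psi_j}$. An atom $A$ may carry positive mass $x_A>0$ only if its defining conjunction is itself satisfiable, which by the induction hypothesis is exactly the solvability of an associated lower-level system $S_A$ assembled from the relevant $S_{\psi_j}$. I would therefore restrict attention to the \emph{realizable} atoms, discarding $x_A$ whenever $S_A$ is unsolvable, and merge the surviving lower-level systems with the current inequalities into the single system $S_\varphi$. Correctness then splits cleanly: from a pointed model $\mathfrak{M},w\models\varphi$ I read off a solution by setting $x_A=T(w)(\llbracket A\rrbracket)$, using additivity for the normalization and the semantics of $L_r,M_s$ for the inequalities; conversely, from a solution I \emph{glue} together, for each realizable atom of positive mass, a witness world provided by the inductive hypothesis, and define the type function at a fresh point $w$ so as to distribute mass $x_A$ over the $A$-worlds. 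Because only finitely many atoms carry positive mass, the resulting measure is finitely supported, so the distinction between $\sigma$-additive and finitely additive measures is immaterial here and the glued model lies in the required class.

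The main obstacle I expect is exactly this back-and-forth between the linear (probabilistic) layer and the recursive (logical realizability) layer: one must check that the gluing respects measurability of the type function, that the assembled weights form legitimate probability measures, and that the finiteness of $\mathcal{L}_\varphi$ is invoked at every level to keep the atom count — hence the size of $S_\varphi$ — finite. A secondary but conceptually important bookkeeping point, and the one the paper exploits downstream, is that when $\varphi\in PPL$ no negation is applied to any modal subformula, so no inequality produced is strict; every constraint has the closed form $\sum_A x_A\ge r$ or $\sum_A x_A\le s$, which is precisely the feature recorded in the remark preceding the statement.
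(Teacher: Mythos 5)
You should know at the outset that the paper itself offers no proof of Fact~\ref{system}: it is imported as a black box, with the construction of $S_\varphi$ and the complete proof explicitly deferred to Theorem~3 of \cite{zhou:intui11}. So the only meaningful comparison is with that cited construction and with the way the paper re-instantiates it inside the proof of Lemma~\ref{lemwp}, and on both counts your sketch matches. Your ``realizable atoms'' are exactly the maximally satisfiable sets $H_1,\dots,H_n$ over the finite local language $\mathcal{L}_\varphi$ that the paper uses there: one nonnegative variable per atom, a normalization constraint, the translation of $L_r\psi$ into $\sum_{i\in I_{\psi}} x_i\ge r$ and of $M_s\psi$ into $\sum_{i\in I_{\psi}} x_i\le s$, the recursion on probability depth to certify which atoms may carry positive mass, and the gluing of witness models in the converse direction. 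You also correctly isolate the one feature of the construction the paper actually needs downstream, namely that positive formulas generate only closed (non-strict) inequalities.

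Two loose ends would need patching in a full write-up. First, your disjunctive-normal-form step produces a finite \emph{family} of linear systems, one per disjunct, whereas the Fact asserts a single system $S_\varphi$; since a union of polyhedra is in general not a polyhedron, you cannot simply conjoin them. This is harmless but must be said: either state the result for conjunctions of literals and modal literals (which is how the paper uses it in Lemma~\ref{lemwp}, where $\gamma$ is reduced without loss of generality to a single conjunctive form), or use the fact that solvability of each finite system is decidable to let $S_\varphi$ be a solvable disjunct's system when one exists and an inconsistent dummy system otherwise. Second, the gluing step deserves the measurability check you only gesture at, though it is routine: the type function at the fresh point is supported on the finitely many designated witness worlds, so each set $\{w'\mid T(w')(E)\ge r\}$ decomposes componentwise over the disjoint union, and finite support makes the new measure automatically $\sigma$-additive; this puts the witness model in $\mathcal{PM}$ and not merely in $\mathcal{FPM}$, which also resolves the ambiguity, left implicit in the statement of the Fact, about which class ``satisfiable'' refers to.
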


The following crucial lemma follows from Fact \ref{system} and is needed for characterizing the satisfaction relation of the canonical model.

\begin{lem}\label{lemwp}
	Let $\Gamma$ be a  finite set of positive formulas. If $\Gamma\cup\{L_r\varphi\}$ is not satisfiable, then there is a rational number $r'$ with $0<r'<r$  such that $\Gamma\cup\{L_{r'}\varphi\}$ is not satisfiable.
	Similarly,  if $\Gamma\cup \{M_s\varphi\}$ is not satisfiable, then there is a rational  number $s'$ with $s<s'<1$ such that $\Gamma \cup \{M_{s'}\varphi\}$ is not satisfiable.
	
\end{lem}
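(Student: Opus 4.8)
The plan is to convert the satisfiability question into a linear programming feasibility question via Fact~\ref{system}, and then to exploit the fact that \emph{positive} formulas give only closed (non-strict) inequalities in order to run a compactness argument on the resulting feasible polytope. I treat the $L_r$ case first; the $M_s$ case is dual.

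Fix $\Gamma$ and $\varphi$ and set $\Phi:=\bigwedge_{\gamma\in\Gamma}\gamma$. Choosing a local language fine enough to contain $\Phi$ and $\varphi$ (so that its granularity accommodates all rationals occurring \emph{inside} $\Gamma$ and $\varphi$), I would organize the system supplied by Fact~\ref{system} as $S_0\cup\{\sum_{j\in J}x_j\ge r\}$, where the single inequality $\sum_{j\in J}x_j\ge r$ encodes the top-level demand $L_r\varphi$ (the quantity $\sum_{j\in J}x_j$ being the total mass assigned to the states at which $\varphi$ holds), while $S_0$ encodes $\Gamma$ together with the requirement that the $x_j$ form genuine probability distributions. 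The point of this bookkeeping is that the threshold enters only as the right-hand side of that one inequality, so that $\Gamma\cup\{L_{t}\varphi\}$ is satisfiable precisely when $S_0\cup\{\sum_{j\in J}x_j\ge t\}$ is solvable, \emph{uniformly in} $t$.

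Since $\Gamma$ and $\varphi$ are positive, every inequality in $S_0$ is non-strict; together with the normalization constraints (each probability vector lies in a simplex) this makes the feasible region $F$ of $S_0$ a compact convex set. If $F=\emptyset$ then $\Gamma$ is already unsatisfiable and any rational $r'\in(0,r)$ works, so assume $F\ne\emptyset$. The linear functional $x\mapsto\sum_{j\in J}x_j$ then attains its maximum $M$ on $F$, and $\Gamma\cup\{L_{t}\varphi\}$ is satisfiable iff $M\ge t$. If $\Gamma\cup\{L_r\varphi\}$ is not satisfiable we get $M<r$, and since $M\ge 0$ any rational $r'$ with $M<r'<r$ satisfies $0<r'<r$ and $M\not\ge r'$; hence $\Gamma\cup\{L_{r'}\varphi\}$ is not satisfiable, as required. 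Dually, $M_s\varphi$ contributes the closed inequality $\sum_{j\in J}x_j\le s$, the functional attains its minimum $m$ on $F$, satisfiability of $\Gamma\cup\{M_t\varphi\}$ is equivalent to $m\le t$, and from non-satisfiability at $s$ (so $m>s$) I would pick a rational $s'$ with $s<s'<m$; since $m\le 1$ this gives $s<s'<1$ with $m\not\le s'$.

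The step I expect to require the most care is the uniformity of the system in the threshold, namely isolating all the $r$-dependence into a single closed inequality with a fixed left-hand side and a fixed auxiliary system $S_0$ that serves all values $t$ at once; this is what lets the extremum $M$ (resp.\ $m$) do the work for every $r'$ simultaneously, rather than re-deriving an ad hoc system for each threshold. Positivity is used twice here: once so that the top-level operator $L_r$ (resp.\ $M_s$) translates to a non-strict inequality, and once so that every other constraint is non-strict, making $F$ closed and the extremum attained. This is precisely the feature that fails for full $PL$, where negating a positive formula forces strict inequalities, $F$ need no longer be closed, the supremum need not be attained, and the theory of the earlier counterexample witnesses the corresponding failure of compactness.
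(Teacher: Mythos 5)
Your proposal is correct and takes essentially the same route as the paper's own proof: both translate satisfiability into a linear system via Fact~\ref{system}, isolate the threshold $r$ (resp.\ $s$) as the right-hand side of a single closed inequality over a feasible region that is compact precisely because positivity yields only non-strict constraints, and then choose any rational strictly between the attained maximum $M$ (resp.\ minimum $m$) and the given bound. The paper simply makes explicit the bookkeeping you flagged as the delicate step --- the atoms $H_i$ of the local language of $\bigwedge\Gamma\wedge\varphi$, the index sets $I_\theta$, and the reduction of $\bigwedge\Gamma$ to a single conjunctive positive form --- before invoking the fundamental theorem of linear programming.
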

\begin{proof}
First of all, without loss of generality we may assume that $\Gamma$ is satisfiable. 	Put $\psi=\bigwedge\Gamma \wedge \varphi$.  Let $\{H_1,\dots, H_n
\}$ be the set of all  maximally satisfiable sets of formulas over $\mathcal{L}_\psi$. Associate to each  $H_i$ a variable $x_i$. Note that each formula in $\mathcal{L}_\psi$ is logically equivalent to a disjunction of (conjunction of) $H_i$'s. So, for every formula $\theta$ in this fragment let $I_{\theta}\subseteq \{1,\dots,n\}$, such that $\theta\equiv \bigvee_{i\in I_{\theta}} \bigwedge H_i$.

	Since $\Gamma$ is a finite set of positive formulas, $\gamma=\bigwedge\Gamma$ is also a positive formula. On the other hand,  the positive formula $\gamma$ is equivalent to a disjunction of (satisfiable) formulas of the form
	$$\gamma_i = \bigwedge_j p_{ij} \wedge \bigwedge_{j'} \neg p_{ij'} \wedge \bigwedge_l L_{r_{il}}\varphi_{il} \wedge \bigwedge_{l'} M_{s_{il'}}\varphi_{il'}.$$
	 $\Gamma\cup\{L_r\varphi\}$ is not satisfiable, it follows that for each $i$, $\{\gamma_i\}\cup \{L_r\varphi\}$ is not satisfiable. So without loss of generality we may assume that $\gamma$ is of the form 
	$$ \gamma= \bigwedge_j p_{j} \wedge \bigwedge_{j'} \neg p_{j'} \wedge \bigwedge_l L_{r_{l}}\varphi_{l} \wedge \bigwedge_{l'} M_{s_{l'}}\varphi_{l'}.$$ 
	
 Associate  to	$L_{r_{l}}\varphi_{l}$  a linear inequality of the form $\sum_{i\in I_{\varphi_l}} x_i \geq r_l$. Likewise, for a formula $M_{r_{l'}}\varphi_{l'}$, consider the linear inequality of the form $\sum_{i\in I_{\varphi_{l'}}} x_i \leq s_{l'}$.
	
	Let $S_\Gamma$ be the set of all above inequalities together with 
	\begin{eqnarray*}
		x_i &\geq & 0 \ \ \ \  \ \ \ \ \ \ \ 1\leq i\leq n \\
		x_i &\leq & 1 \ \ \ \  \ \ \ \ \ \ \ 1\leq i\leq n  \\
		x_1+\dots +x_n &\geq & 1 \\
		x_1+\dots +x_n &\leq & 1.
	\end{eqnarray*}
	
	By Fact \ref{system}
	$S_\Gamma$ is solvable, since $\Gamma$ is satisfiable.
	Now consider the following optimization problem:
	\begin{eqnarray*}
		&\mbox{Maximize}& \ \ \   \sum_{i\in I_{\varphi}} x_{i}\\
		&\mbox{Subject to}& \ \ \  S_\Gamma.
	\end{eqnarray*}

	Since $S_\Gamma$ is solvable, it defines a non-empty closed and  bounded set of $\mathbb{R}^n$. Hence by the Fundamental theorem of linear programming (see Theorem 3.4 in \cite{van:lp01})  the above problem has a  solution.
	Let $M$ be the maximum value of $\sum_{i\in I_{\varphi}} x_{i}$. Since $\Gamma\cup\{L_r\varphi\}$ is not satisfiable, $S_{\Gamma}\cup   \{\sum_{i\in I_{\varphi}} x_{i}\geq r\}$ is not solvable. Hence it follows that $M<r$. So for every rational number $r'$ with $M<r'<r$,   one can see that  $\Gamma\cup\{L_{r'}\varphi\}$ is not satisfiable. 
	
	The other assertion can be shown similarly. 
\end{proof}

\begin{prop}\label{max}
Let $\Gamma$ be a finitely satisfiable positive theory. Then $\Gamma$ can be extended to a maximally finitely satisfiable positive theory. Furhermore, if $\Gamma$ is  maximally finitely satisfiable then  for each $PPL$-formula $\varphi$ and $r\in \mathbb{Q}\cap[0,1]$, $\Gamma$ contains at least one of the formulas $L_r\varphi$  and      $M_r\varphi$.

\end{prop}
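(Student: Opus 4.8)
The plan is to establish the two assertions in turn. For the extension statement I would apply Zorn's lemma to the poset $\mathcal{F}$ consisting of all finitely satisfiable positive theories that contain $\Gamma$, ordered by inclusion. This poset is non-empty because $\Gamma \in \mathcal{F}$ by hypothesis. The only thing requiring an argument is that an arbitrary chain $\langle \Gamma_\alpha \rangle$ in $\mathcal{F}$ admits an upper bound, and for this I would take the union $\bigcup_\alpha \Gamma_\alpha$. This union is again a positive theory, and it is finitely satisfiable since any of its finite subsets already lies inside a single $\Gamma_\alpha$ (finitely many formulas, totally ordered chain), which is finitely satisfiable. Zorn's lemma then produces a maximal element of $\mathcal{F}$, the desired maximally finitely satisfiable positive extension of $\Gamma$.

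For the second assertion, let $\Gamma$ be maximally finitely satisfiable and fix a $PPL$-formula $\varphi$ together with $r \in \mathbb{Q}\cap[0,1]$. I would first dispose of the two boundary values: the formulas $L_0\varphi$ and $M_1\varphi$ are valid, hence consistent with $\Gamma$, so by maximality they already belong to $\Gamma$; this settles $r=0$ via $L_0\varphi$ and $r=1$ via $M_1\varphi$. So I may assume $0<r<1$. Arguing by contradiction, suppose $\Gamma$ contains neither $L_r\varphi$ nor $M_r\varphi$. By maximality, both $\Gamma \cup \{L_r\varphi\}$ and $\Gamma \cup \{M_r\varphi\}$ fail to be finitely satisfiable (note $L_r\varphi, M_r\varphi \in PPL$, so these are still positive theories properly extending $\Gamma$). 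Taking the union of the two offending finite subsets, I obtain a single finite $\Delta \subseteq \Gamma$ such that neither $\Delta \cup \{L_r\varphi\}$ nor $\Delta \cup \{M_r\varphi\}$ is satisfiable, while $\Delta$ itself is satisfiable, being a finite subset of the finitely satisfiable $\Gamma$.

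Next I would apply Lemma \ref{lemwp} to $\Delta$ twice. From the unsatisfiability of $\Delta \cup \{L_r\varphi\}$ it yields a rational $r'$ with $0<r'<r$ for which $\Delta \cup \{L_{r'}\varphi\}$ is unsatisfiable; from the unsatisfiability of $\Delta \cup \{M_r\varphi\}$ it yields a rational $s'$ with $r<s'<1$ for which $\Delta \cup \{M_{s'}\varphi\}$ is unsatisfiable. Picking any pointed model $(\mathfrak{M},w) \models \Delta$, the unsatisfiability of $\Delta \cup \{L_{r'}\varphi\}$ forces $T(w)(\llbracket\varphi\rrbracket) < r'$, whereas the unsatisfiability of $\Delta \cup \{M_{s'}\varphi\}$ forces $T(w)(\llbracket\varphi\rrbracket) > s'$. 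Since $r' < r < s'$, these two demands are incompatible, contradicting the satisfiability of $\Delta$. Hence $\Gamma$ must contain $L_r\varphi$ or $M_r\varphi$.

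The main obstacle is precisely this last step: converting the failure of finite satisfiability into an honest contradiction. The decisive leverage is supplied by Lemma \ref{lemwp}, which lets me push the threshold strictly inward on both sides ($r' < r$ and $s' > r$); this inward strictness is what makes a single model of $\Delta$ have to place $T(w)(\llbracket\varphi\rrbracket)$ both strictly below $r'$ and strictly above $s'$, an impossibility because $r' < s'$. Everything else, namely the Zorn argument, the handling of the boundary values $0$ and $1$, and the amalgamation of two finite subsets into one $\Delta$, is routine, so I expect no difficulty there.
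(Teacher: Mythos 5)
Your proof is correct, but it takes a genuinely different route from the paper's on both halves. For the extension step the paper does not use Zorn's lemma: it fixes an enumeration $\psi_1,\psi_2,\dots$ of all positive formulas (available because the language is countable) and builds an increasing chain $\Sigma_0=\Gamma$, $\Sigma_{n+1}=\Sigma_n\cup\{\psi_{n+1}\}$ whenever this stays finitely satisfiable, taking $\Sigma=\bigcup_n\Sigma_n$; your Zorn argument is equally valid and has the mild advantage of working for languages of any cardinality, while the paper's is the more constructive Henkin-style chain. For the second assertion the paper argues directly rather than by contradiction: if $L_r\varphi\notin\Gamma$, maximality gives a finite $\Sigma'\subseteq\Gamma$ with $\Sigma'\cup\{L_r\varphi\}$ unsatisfiable, and then every model of $\Sigma'$ must give $T(w)(\llbracket\varphi\rrbracket)<r\leq r$, i.e.\ $\Sigma'\models M_r\varphi$; hence $\Gamma\cup\{M_r\varphi\}$ is finitely satisfiable and, by maximality, $M_r\varphi\in\Gamma$. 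Note that your invocation of Lemma \ref{lemwp} is actually superfluous: once you know both $\Delta\cup\{L_r\varphi\}$ and $\Delta\cup\{M_r\varphi\}$ are unsatisfiable, any model $(\mathfrak{M},w)$ of the satisfiable set $\Delta$ already has to satisfy $T(w)(\llbracket\varphi\rrbracket)<r$ and $T(w)(\llbracket\varphi\rrbracket)>r$ simultaneously, which is the desired contradiction without any inward pushing of thresholds --- the strict inequalities $<r$ and $>r$ are incompatible on their own. (Lemma \ref{lemwp} earns its keep later, in the truth lemma for the canonical model in Theorem \ref{ppl compact}, where one genuinely needs to separate $\sup\{\alpha \mid L_\alpha\psi\in w\}$ from $r$.) Your separate treatment of the boundary values $r=0,1$ is careful and harmless, but it is only needed because you route the argument through Lemma \ref{lemwp}; both the paper's entailment argument and the simplified contradiction above handle all $r$ uniformly.
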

\begin{proof}
Suppose that $\psi_1,\psi_2,\dots$ is an enumeration of positive formulas. Put $\Sigma_0=\Gamma$. For each $n\in \mathbb{N}$ define
\[  \Sigma_{n+1}= \left\{
\begin{array}{ll}
\Sigma_n\cup\{\psi_{n+1}\} & \text{if it is finitely satisfiable,}  \\
\Sigma_n &  \text{otherwise.} \  \end{array}
\right. \]
Let 	$\Sigma=\bigcup\Sigma_n$. Then it is easy to see that $\Sigma$ is maximally  finitely satisfiable positive theory. Now suppose that  $\Gamma$ is  maximally  finitely satisfiable and $L_r\varphi\not\in\Gamma$. So there is a finite subset $\Sigma'\subseteq \Gamma$ such that $\Sigma'\cup\{L_r\varphi\}$ is not satisfiable.
But this implies that $\Sigma'\models M_r\varphi$ and  then each finite subset $\Sigma''$ of $\Gamma$ including $\Sigma'$ has a model satisfying $M_r\varphi$. Therefore $\Gamma\cup \{M_r\varphi\}$ is finitely satisfiable and as $\Gamma$ is maximally finitely satisfiable, it follows that $M_r\varphi\in\Gamma$.
\end{proof}

Next proposition introduces examples of maximally finitely satisfiable $PPL$-theories and is needed for Theorem \ref{ppl compact}.

\begin{prop}\label{th(M)}
	Let  $(\mathfrak{M},w)$ be a finitely additive probability model. Then the positive theory of $(\mathfrak{M},w)$, i.e.
	$Th_+(\mathfrak{M},w)=\{\varphi\in PPL\ |\ \mathfrak{M},w\models\varphi \}$,
	is a maximally finitely satisfiable $PPL$-theory.
\end{prop}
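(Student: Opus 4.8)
The plan is to verify separately the two clauses in the definition of ``maximally finitely satisfiable'': that $Th_+(\mathfrak{M},w)$ is finitely satisfiable, and that it admits no finitely satisfiable proper extension in the inclusion order. The first clause is immediate, since the pointed model $(\mathfrak{M},w)$ itself satisfies every member of $Th_+(\mathfrak{M},w)$ by definition; hence the whole theory is satisfiable in $\mathcal{FPM}$, and a fortiori finitely satisfiable. All the substance lies in maximality, and there it suffices to fix an arbitrary positive formula $\psi\notin Th_+(\mathfrak{M},w)$ and show that $Th_+(\mathfrak{M},w)\cup\{\psi\}$ is not finitely satisfiable: any finitely satisfiable proper extension of $Th_+(\mathfrak{M},w)$ would contain such a $\psi$ and hence the finitely satisfiable set $Th_+(\mathfrak{M},w)\cup\{\psi\}$, which this would rule out.

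Since $\psi\notin Th_+(\mathfrak{M},w)$ we have $\mathfrak{M},w\not\models\psi$, i.e. $\mathfrak{M},w\models\neg\psi$. The key device is the observation recorded after the definition of the satisfaction relation, that the negation of a positive formula is equivalent to an infinite disjunction of positive formulas. I would make this precise by assigning to each positive $\psi$ a set $N(\psi)$ of positive formulas with the two properties that, in every finitely additive probability model $(\mathfrak{N},u)$, one has $\mathfrak{N},u\models\neg\psi$ if and only if $\mathfrak{N},u\models\chi$ for some $\chi\in N(\psi)$, and moreover each individual $\chi\in N(\psi)$ entails $\neg\psi$, so that $\chi\wedge\psi$ is unsatisfiable. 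Granting this, from $\mathfrak{M},w\models\neg\psi$ I extract a witness $\chi\in N(\psi)$ with $\mathfrak{M},w\models\chi$, whence $\chi\in Th_+(\mathfrak{M},w)$. Then $\{\chi,\psi\}$ is an unsatisfiable finite subset of $Th_+(\mathfrak{M},w)\cup\{\psi\}$, which is exactly the required contradiction with finite satisfiability of the extension.

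The set $N(\psi)$ is built by induction on the structure of $\psi$, and this induction is the only genuine computation. For atoms and negated atoms I put $N(p)=\{\neg p\}$ and $N(\neg p)=\{p\}$; the boolean cases are handled by De Morgan, giving $N(\alpha\wedge\beta)=N(\alpha)\cup N(\beta)$ and $N(\alpha\vee\beta)=\{\chi\wedge\chi' : \chi\in N(\alpha),\ \chi'\in N(\beta)\}$; and the modal cases rest on the density of the rationals, giving $N(L_r\varphi)=\{M_s\varphi : s\in\mathbb{Q}\cap[0,1],\ s<r\}$ and $N(M_r\varphi)=\{L_s\varphi : s\in\mathbb{Q}\cap[0,1],\ s>r\}$. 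One checks routinely that every member of $N(\psi)$ is again positive and that both required properties propagate through the induction. The point I expect to need the most care is precisely the strengthening from ``the full disjunction is equivalent to $\neg\psi$'' to ``each single disjunct entails $\neg\psi$'': we cannot use an infinite disjunction as a formula in $Th_+(\mathfrak{M},w)$, so maximality really depends on locating one finite positive $\chi$ incompatible with $\psi$. This individual-entailment property is what the modal clauses (via rational density) and the disjunctive clause (via the distributed conjunctions $\chi\wedge\chi'$) are designed to preserve, and it is the hinge of the whole argument.
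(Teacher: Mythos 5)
Your proof is correct, and it proves the right statement: finite satisfiability is witnessed by $(\mathfrak{M},w)$ itself, and maximality reduces to producing, for each positive $\psi$ false at $(\mathfrak{M},w)$, a single positive formula $\chi$ true at $(\mathfrak{M},w)$ with $\{\chi,\psi\}$ unsatisfiable; your inductive construction of the witness sets $N(\psi)$ delivers exactly this, and all cases (including the modal ones via density of the rationals, and the disjunction case via the distributed conjunctions) check out over the class $\mathcal{FPM}$, since none of the semantic equivalences used require $\sigma$-additivity.

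The organization differs from the paper's, though the core idea is shared. The paper argues directly: given a finitely satisfiable $PPL$-theory $\Sigma\supseteq Th_+(\mathfrak{M},w)$, it claims by induction on the complexity of formulas that every $\phi\in\Sigma$ already lies in $Th_+(\mathfrak{M},w)$. The crucial cases of that induction are precisely your modal observations (if $\mathfrak{M},w\not\models L_r\varphi$ then $M_s\varphi\in Th_+(\mathfrak{M},w)$ for some rational $s<r$, clashing with $L_r\varphi$), but as literally stated the induction needs a strengthening the paper leaves implicit: for $\phi=\alpha\vee\beta\in\Sigma$ the subformulas $\alpha,\beta$ need not belong to $\Sigma$, so one must instead induct on a statement like ``if $Th_+(\mathfrak{M},w)\cup\{\phi\}$ is finitely satisfiable then $\mathfrak{M},w\models\phi$''. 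Your version avoids this by moving the entire inductive burden into the self-contained lemma on $N(\psi)$, which makes precise the paper's earlier passing remark that the negation of a positive formula is an infinite disjunction of positive formulas, and strengthens it with the individual-entailment property that you correctly identify as the hinge. What each approach buys: the paper's is shorter because it leaves the induction unexpanded; yours is fully rigorous as written, and the witness-set lemma is modular, i.e., it can be quoted independently wherever one needs to refute a positive formula by a positive formula rather than by an infinitary disjunction.
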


\begin{proof}
	Suppose that $\Sigma$ is a finitely satisfiable $PPL$-theory containing $Th_+(\mathfrak{M},w)$.
	By induction on the complexity of formulas we can show that if $\phi\in\Sigma$ then $\phi\in Th_+(\mathfrak{M},w)$, for each $PPL$-formula $\phi$.
\end{proof}

\begin{thm}[$PPL$-Compactness]\label{ppl compact}
	Let $\Gamma$ be a finitely satisfiable  positive theory. Then $\Gamma$ has a finitely additive probability model.
\end{thm}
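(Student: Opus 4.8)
The plan is to build a single canonical finitely additive probability model $\mathfrak{M}_C=(\Omega_C,\mathcal{P}(\Omega_C),T_C,v_C)$ that satisfies every maximally finitely satisfiable positive theory at its own point, and then to locate $\Gamma$ inside it. By Proposition \ref{max} the given finitely satisfiable theory $\Gamma$ extends to a maximally finitely satisfiable positive theory $\Gamma^\ast$, so it suffices to produce a pointed model $(\mathfrak{M}_C,\Gamma^\ast)$ with $\mathfrak{M}_C,\Gamma^\ast\models\varphi$ for every $\varphi\in\Gamma^\ast$. I take $\Omega_C$ to be the set of all maximally finitely satisfiable positive theories, set $v_C(p)=\{\Delta\in\Omega_C\mid p\in\Delta\}$, and write $\hat\varphi=\{\Delta\in\Omega_C\mid \varphi\in\Delta\}$ for each positive formula $\varphi$. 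Because the $\sigma$-algebra is the full power set $\mathcal{P}(\Omega_C)$, the measurability requirement in the definition of a finitely additive probability model is automatic, and the entire difficulty is concentrated in defining the type function $T_C$ correctly.

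For each $\Delta\in\Omega_C$ I define $T_C(\Delta)$ by first fixing its values on the sets $\hat\varphi$,
$$T_C(\Delta)(\hat\varphi)=\sup\{\,r\in\mathbb{Q}\cap[0,1]\mid L_r\varphi\in\Delta\,\}.$$
Proposition \ref{max} guarantees that for each $r$ at least one of $L_r\varphi,M_r\varphi$ lies in $\Delta$, and Lemma \ref{lemwp} is exactly what forces this supremum to be attained and to coincide with $\inf\{s\mid M_s\varphi\in\Delta\}$; together they yield the decisive equivalences $T_C(\Delta)(\hat\varphi)\geq r\iff L_r\varphi\in\Delta$ and $T_C(\Delta)(\hat\varphi)\leq s\iff M_s\varphi\in\Delta$. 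Well-definedness of the assignment -- that $\hat\varphi=\hat\psi$ forces equal values -- follows because $\hat\varphi=\hat\psi$ implies $\varphi$ and $\psi$ are semantically equivalent (otherwise Proposition \ref{th(M)} would supply a maximal theory separating them), whence $L_r\varphi\in\Delta\iff L_r\psi\in\Delta$. I then check that $\varphi\mapsto T_C(\Delta)(\hat\varphi)$ is a finitely additive probability pre-measure on the Boolean algebra generated by the sets $\hat\varphi$: normalization $T_C(\Delta)(\Omega_C)=1$ and finite additivity over disjoint $\hat\varphi,\hat\psi$ are read off from the linear-inequality description of satisfiability furnished by Fact \ref{system}, applied inside the local language containing $\varphi$ and $\psi$, which is finite up to equivalence. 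Finally I extend this pre-measure from the generated subalgebra to all of $\mathcal{P}(\Omega_C)$ by the standard extension theorem for finitely additive measures, obtaining the required $T_C(\Delta)$.

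With $T_C$ in hand, the argument closes with a Truth Lemma proved by induction on the complexity of positive formulas: $\mathfrak{M}_C,\Delta\models\varphi\iff\varphi\in\Delta$, equivalently $\llbracket\varphi\rrbracket=\hat\varphi$. The propositional base case is the definition of $v_C$, its negation case uses maximality of $\Delta$, the $\wedge$ and $\vee$ steps use that membership in a maximally finitely satisfiable theory respects conjunction and disjunction, and the modal steps for $L_r\varphi$ and $M_s\varphi$ are precisely the equivalences established above once the induction hypothesis identifies $\llbracket\varphi\rrbracket$ with $\hat\varphi$. Applying this at $\Gamma^\ast$ gives $\mathfrak{M}_C,\Gamma^\ast\models\varphi$ for all $\varphi\in\Gamma^\ast$, and since $\Gamma\subseteq\Gamma^\ast$ the theory $\Gamma$ is satisfied, which proves the theorem. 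I expect the main obstacle to be the measure construction of the second paragraph: verifying that the values $T_C(\Delta)(\hat\varphi)$ are mutually consistent and finitely additive, where the role of Lemma \ref{lemwp} is indispensable in converting the strict/non-strict mismatch between suprema and membership into the clean closed condition $T_C(\Delta)(\hat\varphi)\geq r\iff L_r\varphi\in\Delta$ needed to match the semantics of $L_r$.
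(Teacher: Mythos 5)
Your proposal follows the paper's own strategy almost step for step: the canonical model built on the set of all maximally finitely satisfiable positive theories, the type function defined by $T_C(\Delta)(\hat\varphi)=\sup\{r\in\mathbb{Q}\cap[0,1]\mid L_r\varphi\in\Delta\}$, extension of that function to $\mathcal{P}(\Omega_C)$, and a truth lemma whose modal cases rest on Lemma \ref{lemwp} and Proposition \ref{max}, with well-definedness via Proposition \ref{th(M)}. However, there is a genuine gap at the measure-construction step, which is exactly the technical heart of the proof. Since $PPL$ is not closed under negation, the family $\Theta=\{\hat\varphi\mid\varphi\in PPL\}$ is only a \emph{lattice}, not a Boolean algebra: the complement of $\hat\varphi$ is in general not of the form $\hat\psi$. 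Your function is defined only on lattice elements, so ``checking finite additivity over disjoint $\hat\varphi,\hat\psi$'' is not a verification on the Boolean algebra generated by $\Theta$ (on most of that algebra the function has not been defined at all), nor is it a sufficient condition for a finitely additive extension to exist. Indeed, a strict, monotone, normalized lattice function that is additive on disjoint pairs can fail to have \emph{any} finitely additive extension: take $X=\{1,2,3\}$, the lattice $\{\emptyset,\{1\},\{1,2\},\{1,3\},X\}$, and $\mu\equiv 0$ except $\mu(X)=1$; every pair of nonempty lattice elements meets, so disjoint-additivity holds vacuously, yet any finitely additive extension would force $\mu(X)=\mu(\{1\})+\mu(\{2\})+\mu(\{3\})=0$.

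What must actually be verified is the \emph{modular law} $T_C(\Delta)(\hat\varphi_1)+T_C(\Delta)(\hat\varphi_2)=T_C(\Delta)(\hat\varphi_1\cup\hat\varphi_2)+T_C(\Delta)(\hat\varphi_1\cap\hat\varphi_2)$ for \emph{all} pairs, not only disjoint ones, together with strictness and monotonicity; then the Smiley--Horn--Tarski theorem (Fact \ref{SHT thm}) extends this valuation from the lattice $\Theta$ to the generated Boolean algebra $\mathcal{B}(\Theta)$, and only after that does the Horn--Tarski theorem (Fact \ref{tarski fa}) --- your ``standard extension theorem'' --- carry it to $\mathcal{P}(\Omega_C)$. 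The modularity check is also where finite satisfiability of $\Delta$ does real work: if modularity failed, one could choose rationals $\alpha'_1,\alpha'_2,\alpha'_\vee,\alpha'_\wedge$ witnessing the failure so that $\{M_{\alpha'_1}\varphi_1,\,M_{\alpha'_2}\varphi_2,\,L_{\alpha'_\vee}(\varphi_1\vee\varphi_2),\,L_{\alpha'_\wedge}(\varphi_1\wedge\varphi_2)\}$ is a finite unsatisfiable subset of $\Delta$, a contradiction. Your appeal to Fact \ref{system} could in principle be used to run this argument, but as written the proposal never states the modular law, never distinguishes the lattice from the algebra it generates, and invokes an extension theorem whose hypothesis (a finitely additive measure already defined on a Boolean algebra) has not been secured. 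The remainder of your argument --- the truth lemma and the final appeal to Proposition \ref{max} --- matches the paper and is fine once this step is repaired.
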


\begin{proof}	
  In the following we define the model $(\mathfrak{M},w_0)$ in a way that $\mathfrak{M},w_0\models\Sigma$.
	
	Let $\Omega_C$ be the set of all maximally finitely satisfiable sets of positive formulas. Put
	$$\Theta=\{[\varphi]\ |\; \varphi \text{ is a  positive formula} \}$$
	where
	$[\varphi]=\{w\in \Omega_C\ |\ \varphi\in w \}$.
	Note that the set $(\Theta,\cap,\cup, [\perp],[\top])$ forms a lattice and for every $\varphi,\psi\in PPL$, we have  the followings
	
	\begin{itemize}
		\item   $[\varphi]\cap [\psi]=[\varphi\wedge \psi]$.
		\item  $[\varphi]\cup [\psi]=[\varphi\vee \psi]$. 
	\end{itemize}
	
	Moreover, as $PPL$ is not closed under negation, $\Omega_C$ is not an algebra. Now we define the function 	
	$T': \Omega_C\times\Theta \rightarrow [0,1]$ as follows:
	$$T'(w)([\varphi]) = \sup\{r\in \mathbb{Q}\cap [0,1]\ |\ L_r\varphi\in w \}.$$
	
	\noindent
	\textbf{Claim.} For each $w\in \Omega_C$ the function $T'(w)$ is a valuation on the lattice $\Theta$.
	\begin{proof}[Proof of Claim.]
				Let $w\in \Omega_C$.
		\begin{itemize}
						
			\item $T'(w)(\emptyset)=0$, since  $L_r\perp$ is not satisfiable for any $r>0$. 
			\item Suppose that	$[\varphi]\subseteq[\psi]$. Then we have to prove that 	$T'(w)([\varphi])\leq T'(w)([\psi])$.
			We show that if $[\varphi]\subseteq[\psi]$, then we have $\varphi\models\psi$. Otherwise, there exists a model $\mathfrak{N},v \models \varphi$ and $\mathfrak{N},v \nmodels \psi$. So, $\varphi\in Th_+(\mathfrak{N},v)$ and $\psi\notin Th_+(\mathfrak{N},v)$, which is a  contradiction by Lemma \ref{th(M)}.

			\item We have to show that for all $[\varphi_1],[\varphi_2]\in \Theta$,
			$$T'(w)([\varphi_1]) + T'(w)([\varphi_2]) = T'(w)([\varphi_1]\cup[\varphi_2]) + T'(w)([\varphi_1]\cap[\varphi_2]).$$
		
			Suppose that 	$T'(w)([\varphi_i])=\alpha_i$, for $i=1,2$ and $T'(w)([\varphi_1\vee\varphi_2])=\alpha_\vee$
			and 		$T'(w)([\varphi_1\wedge\varphi_2])=\alpha_\wedge$.
			
			If 	$\alpha_1 +\alpha_2 <\alpha_\vee+\alpha_\wedge$,
			then find 		$\epsilon_1,\epsilon_2,\epsilon_\vee,\epsilon_\wedge>0$
			such that
			$\alpha_i'=(\alpha_i+\epsilon_i)\in\mathbb{Q}$ for $i\in\{1,2\}$,
			and
			$\alpha_i'=(\alpha_i-\epsilon_i)\in\mathbb{Q}$, for $i\in\{\wedge,\vee\}$,
			and 	
			$(\alpha'_1)+(\alpha'_2) < (\alpha'_\vee) +(\alpha'_\wedge)$.
			But in this case
			$\{M_{\alpha_1'}\varphi_1,\; M_{\alpha'_2}\varphi_2,\; L_{\alpha'_\vee}(\varphi_1\vee\varphi_2),\; L_{\alpha'_\wedge}(\varphi_1\wedge\varphi_2)\}$
			is a finite subset of $w$ and not satisfiable, a contradiction.
			
A similar argument shows that the inequality   $\alpha_1 +\alpha_2 >\alpha_\vee+\alpha_\wedge$ leads to a contradiction.
		\end{itemize}
	\end{proof}
	Now let $\mathcal{B}(\Theta)$ be the boolean algebra generated by $\Theta$. By Fact \ref{SHT thm} for every $w\in \Omega_C$, one can extend the valuation $T'(w)$ to a finitely additive measure $T''(w)$ on $\mathcal{B}(\Theta)$. Subsequently,  by Fact \ref{tarski fa} we can extend each $T''(w)$ to a finitely additive measure $T_C(w)$ on $\mathcal{P}(\Omega_C)$. 
Note that the measurability of $T_C:\Omega\times \mathcal{P}(\Omega_C)\rightarrow [0,1]$ comes for free.
Now to define the valuation function $v_C$, for each proposition $p$, put $v_C(p)=\{w\in \Omega_C\ |\ p\in w\}$.
	
Having defined functions $T_C$ and $v_C$, we assume the model $\mathfrak{M}_{C}=(\Omega_C,\mathcal{P}(\Omega_C),T_C,v_C)$.
The following claim characterizes the satisfaction relation of $\mathfrak{M}_{C}$.    

	\noindent
\textbf{Claim.} For every positive formula $\varphi$ and $w\in \Omega_C$, $$\mathfrak{M}_{C},w\models \varphi\;\;\;\  \text{if and only if}\;\;\;\;\ \varphi\in w.$$
 
The above claim states that inside $\mathfrak{M}_{C}$,  for each $\varphi$ we have $\llbracket\varphi\rrbracket_{\mathfrak{M}_{C}} = [\varphi]$.
	\begin{proof}[Proof of Claim.]
		The proof  proceeds by induction on the complexity of positive formulas. The induction base for  atomic formulas as well as the induction step for boolean operators $\wedge,\vee$ are clear.
		
		Now consider the case where $\varphi=L_r\psi$, knowing that
		$\llbracket\psi\rrbracket_{\mathfrak{M}_{C}}= [\psi]$. 
		Now suppose $w\in[L_r\psi]$. So, $L_r\psi\in w$ and $T_C(w)([\psi])=\sup\{\alpha\ |\ L_\alpha\psi\in w \} \geq r$.
		So by induction hypothesis $T_C(w)(\llbracket\psi\rrbracket_{\mathfrak{M}_{C}}) \geq r$. Conversely, suppose that
		$w\in \llbracket\varphi\rrbracket_{\mathfrak{M}_{C}}$. In this case by induction hypothesis we have $T_C(w)([\psi]) \geq r$.
		Therefore,  $\sup\{\alpha\ |\ L_\alpha\psi\in w \} \geq r$.
		Now if $L_r\psi\not\in w$, then, as $w$ is maximally finitely satisfiable, $w\cup \{L_r\psi\}$ is not finitely satisfiable. So there exists a finite subset $w'$ of $w$ such that $w'\cup \{L_r\varphi\}$ is not satisfiable. Thus, by  Lemma \ref{lemwp} there exists $r'<r$ such that
		$w'\cup \{L_{r'}\psi\}$
		is not finitely satisfiable and   $L_{r'}\psi\nin w$. But this contradicts with $\sup\{\alpha\ |\ L_\alpha\psi\in w \} \geq r$. Hence $\llbracket\varphi\rrbracket_{\mathfrak{M}_{C}}= [\varphi]$ and the induction is proved for $L_r\varphi$. Lemma \ref{lemwp} can be applied to show that the  induction hypothesis holds for $\varphi= M_s\psi$.
	\end{proof}
	
Now having proved the claim we can finish the proof by noticing that if $\Gamma$ is a finitely satisfiable theory, then by Lemma \ref{max} one can find a maximally finitely satisfiable $w$ which includes $\Gamma$. Hence we have that $  \mathfrak{M}_{C},w\models \Gamma$.
\end{proof}

\section{The L\"{o}wenheim-Skolem Number of Probability Logics}\label{LS sec}

In this section we study the L\"{o}wenheim-Skolem number of the class  of probability and  finitely additive probability models. The L\"{o}wenheim-Skolem number of a class of models $\mathcal{C}$  of a logic $\mathcal{L}$ is the least infinite cardinal $\kappa$ such that every satisfiable $\mathcal{L}$-theory  has a model of size at most $\kappa$. In this section we prove that this number is uncountable cardinal of at most $2^{\aleph_0}$ for the class of  probability models, while it is $\aleph_0$ for the class of finitely additive models.

\begin{thm}\label{LS PL}
Let $\lambda$ be the L\"{o}wenheim-Skolem number of probability models with respect to probability logic. Then $\aleph_0 <\lambda \leq 2^{\aleph_0}$.
\end{thm}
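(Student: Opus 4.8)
The plan is to prove the two inequalities separately: first produce a satisfiable $PL$-theory all of whose models are uncountable (this gives $\aleph_0 < \lambda$), and then establish a uniform bound showing every satisfiable $PL$-theory is realized in a model of cardinality at most $2^{\aleph_0}$ (this gives $\lambda \leq 2^{\aleph_0}$).

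For the lower bound I would exploit $\sigma$-additivity to force an atomless measure. Fix propositional variables $p_1, p_2, \ldots$ and for each finite binary string $s = (s_1,\dots,s_n)$ set $\chi_s = \bigwedge_{i=1}^n p_i^{s_i}$, where $p_i^1 = p_i$ and $p_i^0 = \neg p_i$. Consider $\Gamma = \{\, L_{2^{-n}}\chi_s : n \geq 1,\ s \in \{0,1\}^n \,\}$, a $BPL$-theory (so, interestingly, compactness for $BPL$ does not prevent forcing large models). At each level $n$ the $2^n$ formulas $\chi_s$ partition the space and their probabilities sum to $1 = 2^n\cdot 2^{-n}$, so the constraints $L_{2^{-n}}\chi_s$ force $T(w)(\llbracket\chi_s\rrbracket)=2^{-n}$ exactly. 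The theory is satisfiable: take $\Omega = 2^{\mathbb N}$ with its Borel $\sigma$-algebra, the constant type function equal to the uniform (coin-flipping) measure $\nu$, and $v(p_i)=\{x : x_i = 1\}$. Now suppose $(\mathfrak{M},w)\models\Gamma$ with $\Omega$ countable. The coloring $c:\Omega\to 2^{\mathbb N}$ defined by $c(\omega)_i = 1$ iff $\mathfrak{M},\omega\models p_i$ is measurable, since $c^{-1}$ of a cylinder is $\llbracket\chi_s\rrbracket\in\mathcal{A}$; the pushforward $c_*T(w)$ agrees with $\nu$ on all cylinders, hence everywhere by uniqueness of a $\sigma$-additive measure on the generating $\pi$-system. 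But $c(\Omega)$ is a countable, hence Borel, subset of $2^{\mathbb N}$, so $\nu(c(\Omega))=0$ because $\nu$ is atomless, whereas $c_*T(w)(c(\Omega)) = T(w)(c^{-1}(c(\Omega))) = T(w)(\Omega) = 1$, a contradiction. Hence every model of $\Gamma$ is uncountable, so $\lambda > \aleph_0$. (This step genuinely needs $\sigma$-additivity, consistent with the $\aleph_0$ bound for finitely additive models established later.)

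For the upper bound, given a satisfiable $\Gamma$ with a model $(\mathfrak{M},w)=(\Omega,\mathcal{A},T,v)$, I would collapse $\Omega$ onto the set of complete $PL$-theories it realizes. Put $q(\omega)=Th(\omega)=\{\varphi : \mathfrak{M},\omega\models\varphi\}$ and $\Omega' = q(\Omega)$; since $PL$ is countable, $|\Omega'|\leq 2^{\aleph_0}$. Let $\mathcal{A}'$ be generated by the sets $[\varphi]' = \{t\in\Omega' : \varphi\in t\}$ (note $q^{-1}([\varphi]') = \llbracket\varphi\rrbracket$, so $q$ is measurable), set $v'(p)=[p]'$, and define $T'(t)=q_*T(\omega)$ for any $\omega$ with $q(\omega)=t$. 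This is well defined because $T(\omega)(\llbracket\varphi\rrbracket)=\sup\{r : L_r\varphi\in Th(\omega)\}$ depends only on $t$, so the two candidate pushforwards agree on the $\pi$-system $\{[\varphi]'\}$ and hence, by uniqueness of extension, on all of $\mathcal{A}'$. Each $T'(t)$ is a $\sigma$-additive probability measure, and measurability $\{t : T'(t)(E)\geq r\}\in\mathcal{A}'$ holds on generators, since $T'(t)([\varphi]')\geq r \iff L_r\varphi\in t \iff t\in[L_r\varphi]'$, and then extends to all $E\in\mathcal{A}'$ by the monotone-class argument of Lemma \ref{meas ultrapro}. A routine induction (the truth lemma) shows $\mathfrak{M}',q(\omega)\models\varphi \iff \mathfrak{M},\omega\models\varphi$, so $\mathfrak{M}',q(w)\models\Gamma$ in a model of size at most $2^{\aleph_0}$.

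In both directions the substantive difficulty is measure-theoretic rather than logical. The decisive point of the lower bound is that $\sigma$-additivity pins the pushforward down to the atomless measure $\nu$, forcing a countable carrier to have full measure, which is impossible. The delicate point of the upper bound is the well-definedness and $\sigma$-additivity of the quotient type function $T'$; the key tool there, and what I expect to be the main obstacle to get exactly right, is the uniqueness of a $\sigma$-additive measure determined by its values on the generating $\pi$-system $\{[\varphi]'\}$.
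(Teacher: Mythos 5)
Your proposal is correct, but both halves take a genuinely different route from the paper. For the lower bound, the paper uses the theory $\Gamma=\{L_{\frac{1}{2}}\neg(p_i\leftrightarrow p_j)\ |\ i<j\}$ (a $BPL$-theory up to rewriting $\neg(p_i\leftrightarrow p_j)$ as $(p_i\wedge\neg p_j)\vee(p_j\wedge\neg p_i)$), satisfied, as in your construction, at the coin-flipping measure on $\{0,1\}^{\mathbb{N}}$; but it rules out countable models by an elementary pigeonhole argument: in a countable model some finite set of worlds carries mass $>\frac{1}{2}$, finitely many worlds must agree on some pair $p_i,p_j$, and this contradicts $T(w)(\llbracket\neg(p_i\leftrightarrow p_j)\rrbracket)\geq\frac{1}{2}$. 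Your dyadic-partition theory instead forces the pushforward of $T(w)$ under the coloring map to equal the uniform measure $\nu$, and gets the contradiction from atomlessness (a countable image would be a $\nu$-null Borel set of full measure). Both arguments hinge on $\sigma$-additivity; yours costs more measure theory ($\pi$-system uniqueness, pushforwards) but buys robustness: it never needs singletons of $\Omega$ to be measurable, whereas the paper's argument assumes ``we can take $\mathcal{A}_{\mathfrak{M}}=\mathcal{P}(\Omega_{\mathfrak{M}})$,'' a reduction it does not justify. For the upper bound, the paper simply cites Zhou's thesis (satisfiable $\Rightarrow$ consistent $\Rightarrow$ satisfied in the canonical model, which has $2^{\aleph_0}$ states), while your quotient onto the set of realized complete theories is self-contained: it replaces the detour through a proof system, soundness, and strong completeness by a truth-lemma induction, with well-definedness of $T'$ from $\pi$-system uniqueness and measurability of $T'$ from the monotone class argument of Lemma \ref{meas ultrapro}. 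One point you should make explicit in that last step: Fact \ref{monotone class} applies to an \emph{algebra}, so you need the generating family $\{[\varphi]'\}$ to be closed under complement; this holds precisely because each $t\in\Omega'$ is a complete $PL$-theory, so that $\Omega'\setminus[\varphi]'=[\neg\varphi]'$.
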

\begin{proof}
If theory $\Sigma$ is satisfiable then it is consistent. Hence by Theorem 3.2.13 of \cite{zhou:thesis07} there is a canonical model which models $\Sigma$. But the size of this model is $2^{\aleph_0}$. So, $\lambda \leq 2^{\aleph_0}$. Furthermore, the following example shows that there is a $BPL$-theory which does not have a countable model. Hence the proof is complete.
\end{proof}
\begin{exm}
		Let
		$$\Gamma = \{ L_{\frac{1}{2}}\neg(p_i \leftrightarrow p_j) \; |\; i<j, \; i,j\in\mathbb{N}  \}.$$
		
		$\Gamma$ is satisfiable, specially it has a model of size $2^{\aleph_0}$.
		To see this, define the model $\mathfrak{N}$ as follows. Let $\Omega_\mathfrak{N}=\{0,1\}^\mathbb{N}$ and
		$\mathcal{A}_\mathfrak{N}$ be a product $\sigma$-algebra, i.e. a $\sigma$-algebra generated by direct product $\prod_{i\in \mathbb{N}}A_i$ where except for a finite number of $A_i$s the rest of them are $\{0,1\}$.
		Suppose $\mu(0)=\mu(1)=\frac{1}{2}$ and
		$T((0_i)_{i\in\mathbb{N}})$ is a product measure of $\mu$. Moreover, for each proposition $p_j$ put
		$v(p_j)=\{(w_i)_{i\in\mathbb{N}}\, |\, w_j=1\}$. Therefore,
		\begin{align*}
		T((0_i)_{i\in\mathbb{N}})(\{(w_i)_{i\in\mathbb{N}}\; |\; \mathfrak{N},(w_i)_{i\in\mathbb{N}}\not\models p_l\leftrightarrow p_j\}) & =\\
		T((0_i)_{i\in\mathbb{N}})(\prod_{i\in \mathbb{N}} A_i) + T((0_i)_{i\in\mathbb{N}})(\prod_{i\in \mathbb{N}} B_i) & = \frac{1}{2^2} + \frac{1}{2^2} =\frac{1}{2}
		\end{align*}
		where $A_l=B_j=\{1\}$ and $A_j=B_l=\{0\}$ and $A_i=B_i=\{0,1\}$ otherwise. So $\mathfrak{N},(0_i)_{i\in\mathbb{N}}\models \Gamma$.	
		
		Now we show that there is no countable model for $\Gamma$. Suppose $\mathfrak{M},w\models\Gamma$ and
		$\Omega_\mathfrak{M}$ is a countable set. Specially we can assume that  $\mathcal{A}_\mathfrak{M}=\mathcal{P}(\Omega_\mathfrak{M})$. 
		Hence we have $\sum_{i=1}^\infty T(w)(\{w_i\})=1$. Therefore, there is $N\in\mathbb{N}$ such that
		$\sum_{i=1}^NT(w)(\{w_i\})>\frac{1}{2}$.
		On the other hand for each finite number of worlds, say $w_1,\dots,w_n$, there are $i,j\in\mathbb{N}$ such that 
		$\mathfrak{M}, w_k \models (p_i\leftrightarrow p_j)$, for $k=1,\dots,n$. Since
		$T(w)(\{w'\, |\, \mathfrak{M},w'\nmodels p_i\leftrightarrow p_j\})\geq \frac{1}{2}$, we should have
		$T(w)(\{w_1,\dots,w_N\})\leq T(w)(\{w'\, |\, \mathfrak{M},w'\models p_i\leftrightarrow p_j\})\leq  \frac{1}{2}$, a contradiction.
	\end{exm}

\begin{rem}
It is shown, for example in \cite{kozen2013strong} any satisfiable $PL$-theory has an analytic probability model. On the other hand, any analytic space is either countable or has size of continuum. Hence the L\"{o}wenheim-Skolem number of class of analytic probability models is $2^{\aleph_0}$.   	
\end{rem}

Now we turn to prove the L\"{o}wenheim-Skolem number of finitely additive models.

\begin{thm}\label{LS fad}
	The L\"{o}wenheim-Skolem number of the class of finitely additive probability models with respect to probability logic is $\aleph_0$. 
\end{thm}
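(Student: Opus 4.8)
The plan is to prove the two inequalities separately. Since the L\"owenheim--Skolem number is by definition the least \emph{infinite} cardinal with the stated property, the bound $\lambda\geq\aleph_0$ is immediate, so the entire content lies in the upper bound: every satisfiable $PL$-theory $\Sigma$ admits a \emph{countable} finitely additive probability model. I would establish this by a downward L\"owenheim--Skolem argument. Start from any model $(\mathfrak{M},w_0)$ with $\mathfrak{M}=(\Omega,\mathcal{A},T,v)$ and $\mathfrak{M},w_0\models\Sigma$ (such a model exists by satisfiability), and carve out a countable submodel that preserves the truth of every $PL$-formula at every retained point.

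The first step is to choose the countable state space. Because $\mathcal{P}$ and $\mathbb{Q}\cap[0,1]$ are countable, there are only countably many $PL$-formulas. For each formula $\varphi$ with $\llbracket\varphi\rrbracket_{\mathfrak{M}}\neq\emptyset$ pick a witness $w_\varphi\in\llbracket\varphi\rrbracket_{\mathfrak{M}}$, and set $\Omega_0=\{w_0\}\cup\{w_\varphi\mid\varphi\in PL,\ \llbracket\varphi\rrbracket_{\mathfrak{M}}\neq\emptyset\}$, a countable set. The point of this choice is the following \emph{separation} property: whenever $\llbracket\varphi\rrbracket_{\mathfrak{M}}\neq\llbracket\psi\rrbracket_{\mathfrak{M}}$, their symmetric difference is the extension of the $PL$-formula $\neg(\varphi\leftrightarrow\psi)$ and is nonempty, hence meets $\Omega_0$, so $\llbracket\varphi\rrbracket_{\mathfrak{M}}\cap\Omega_0\neq\llbracket\psi\rrbracket_{\mathfrak{M}}\cap\Omega_0$. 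Consequently the trace map $\llbracket\varphi\rrbracket_{\mathfrak{M}}\mapsto\llbracket\varphi\rrbracket_{\mathfrak{M}}\cap\Omega_0$ is a Boolean isomorphism from the countable algebra $\{\llbracket\varphi\rrbracket_{\mathfrak{M}}\mid\varphi\in PL\}$ onto the countable Boolean algebra $\mathcal{A}_0=\{\llbracket\varphi\rrbracket_{\mathfrak{M}}\cap\Omega_0\mid\varphi\in PL\}\subseteq\mathcal{P}(\Omega_0)$.

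Next I would transport the measures. For each $w\in\Omega_0$ define $\nu_w$ on $\mathcal{A}_0$ by $\nu_w(\llbracket\varphi\rrbracket_{\mathfrak{M}}\cap\Omega_0)=T(w)(\llbracket\varphi\rrbracket_{\mathfrak{M}})$; this is well defined and finitely additive on $\mathcal{A}_0$ precisely because of the separation property (equality of traces forces equality of the original sets, and disjointness in $\Omega_0$ forces disjointness in $\Omega$) together with finite additivity of $T(w)$, and it is a probability measure since $\llbracket\top\rrbracket_{\mathfrak{M}}\cap\Omega_0=\Omega_0$. Using Fact \ref{tarski fa} I extend each $\nu_w$ to a finitely additive probability measure $T_0(w)$ on all of $\mathcal{P}(\Omega_0)$; setting $v_0(p)=v(p)\cap\Omega_0$ and taking the algebra to be $\mathcal{P}(\Omega_0)$ (so measurability is automatic) yields a countable finitely additive model $\mathfrak{M}_0=(\Omega_0,\mathcal{P}(\Omega_0),T_0,v_0)$.

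Finally I would prove the truth lemma $\llbracket\varphi\rrbracket_{\mathfrak{M}_0}=\llbracket\varphi\rrbracket_{\mathfrak{M}}\cap\Omega_0$ by induction on $\varphi$. The atomic and Boolean cases are immediate from the definition of $v_0$ and the fact that taking traces commutes with the Boolean operations; for $L_r\psi$ and $M_r\psi$ the induction hypothesis gives $\llbracket\psi\rrbracket_{\mathfrak{M}_0}=\llbracket\psi\rrbracket_{\mathfrak{M}}\cap\Omega_0\in\mathcal{A}_0$, and since $T_0(w)$ extends $\nu_w$ we obtain $T_0(w)(\llbracket\psi\rrbracket_{\mathfrak{M}_0})=T(w)(\llbracket\psi\rrbracket_{\mathfrak{M}})$, so the two threshold conditions agree. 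Evaluating at $w_0$ yields $\mathfrak{M}_0,w_0\models\Sigma$, completing the upper bound. The main subtlety — and the reason the argument cannot lower the number below $\aleph_0$ in the $\sigma$-additive world of Theorem \ref{LS PL} — is exactly the appeal to the finitely additive extension Fact \ref{tarski fa}: the resulting $T_0(w)$ is in general only finitely additive, so the construction lives in $\mathcal{FPM}$, which is what makes a countable model possible here while it is provably impossible for $\sigma$-additive models.
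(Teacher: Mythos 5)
Your proposal is correct and follows essentially the same route as the paper's own proof: a countable witness set containing $w_0$ with one witness per satisfiable formula, the separation property guaranteeing that the trace map on definable sets is injective (hence the transported measures $\nu_w$ are well defined and finitely additive), and a truth lemma proved by induction on formulas. The only difference is cosmetic: the paper keeps the countable trace algebra $\mathcal{B}_\Omega$ itself as the algebra of the submodel, checking measurability directly via $\{w\in\Omega \mid T_\Omega(w)(\llbracket\varphi\rrbracket_\Omega)\geq r\}=\llbracket L_r\varphi\rrbracket_\Omega$, whereas you extend each $\nu_w$ to all of $\mathcal{P}(\Omega_0)$ by Fact \ref{tarski fa} so that measurability is automatic; both are valid.
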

\begin{proof}
	Let $\Gamma$ be a satisfiable theory and suppose  a finitely additive probability model $\mathfrak{M}=(M, \mathcal{B}, T:M\times \mathcal{B}\rightarrow [0,1],v)$ models $\Gamma$ at a point $w_0\in M$. We construct a countable finitely additive probability model $\mathfrak{M}'$ which models $\Gamma$.   
	To this end, define a countable set $\Omega\subseteq M$ which includes $w_0$ and has the property that for each probability formulas $\varphi$, if
	$\llbracket\varphi\rrbracket_\mathfrak{M}  \neq \emptyset$,
	then $\Omega\cap \llbracket\varphi\rrbracket_\mathfrak{M} \neq \emptyset$. For each formula $\varphi$ let $ \llbracket\varphi\rrbracket_{\Omega}=\Omega\cap \llbracket\varphi\rrbracket_\mathfrak{M}$. Put $\mathcal{B}_{\Omega}=\{\llbracket\varphi\rrbracket_{\Omega}\ |\ \varphi\in PL\}$. Note that, $\mathcal{B}_{\Omega}$ forms an algebra. Furthermore,  
	
	\noindent
	\textbf{Claim:} If $ \llbracket\varphi\rrbracket_{\Omega}= \llbracket\psi\rrbracket_\Omega$, then
	$\llbracket\varphi\rrbracket_\mathfrak{M} = \llbracket\psi\rrbracket_\mathfrak{M}$.
	
	\begin{proof}[Proof of Claim.]
		To see this, we may suppose that both sets $ \llbracket\varphi\rrbracket_{\Omega},$ $\llbracket\psi\rrbracket_\Omega$ are nonempty. Now if  	$\llbracket\varphi\rrbracket_\mathfrak{M}\neq  \llbracket\psi\rrbracket_\mathfrak{M}$, then either of the sets  $\llbracket\varphi\wedge \neg\psi\rrbracket_\mathfrak{M}$ and  $\llbracket\psi\wedge \neg\varphi\rrbracket_\mathfrak{M}$ are nonempty. Hence we have  $(\llbracket\varphi\wedge \neg \psi \rrbracket_{\Omega})\cup (\llbracket\psi\wedge \neg \varphi \rrbracket_{\Omega})\neq \emptyset $. But this implies that $ \llbracket\varphi\rrbracket_{\Omega}\neq \llbracket\psi\rrbracket_\Omega$.    
	\end{proof}

	Now define the function 	
	$T_\Omega: \Omega\times \mathcal{B}_{\Omega} \rightarrow [0,1]$ as follows:
	$$T_\Omega(v)( \llbracket\varphi\rrbracket_{\Omega}) = T_\mathfrak{M}(v)(\llbracket\varphi\rrbracket_\mathfrak{M}).$$
	By the above claim $T_{\Omega}$ is a well-defined function. It is not hard to see that for each $w\in\Omega$, $T_{\Omega}(w,-)$ defines a finitely additive probability measure.   Moreover, for each formula $\varphi$ and $r\in \mathbb{Q}\cap [0,1]$, $$\{w\in \Omega\ |\  T_{\Omega}(w, \llbracket\varphi\rrbracket_{\Omega})\geq r\}= \llbracket L_r\varphi\rrbracket_{\Omega}.$$ Hence $T$ is a measurable function.

	Finally, for each proposition $p$, put 	
	$v_\Omega(p)= v_\mathfrak{M}(p) \cap \Omega_\mathfrak{N}$ and set $\mathfrak{N}=(\Omega,\mathcal{B}_{\Omega},T_{\Omega},v_{\Omega})$.
	
	By induction on the complexity of formulas one can prove that
	$\llbracket\varphi\rrbracket_\mathfrak{N}=  \llbracket\varphi\rrbracket_\Omega$.
	Therefore,
	$\mathfrak{N},w\models\Gamma$.
\end{proof}

\section{Conclusion}

In this paper we investigated probability logic from model theoretic point of view. 
Specifically we study the compactness property and the L\"owenheim-Skolem number of probability logic with respect to both of the class of probability models and finitely additive models.
We showed that, although probability logic does  not have the compactness property the basic and positive fragments of that are compact respectively to the class of probability models and finitely additive probability models. 
Furthermore, we proved that the L\"owenheim-Skolem number of probability logic is $2^{\aleph_0}$ while it is $\aleph_0$ when we consider finitely additive models.

One of the other interesting issue in model theory which is worthwhile to study for probability logic is the Lindst\"om type theorem.
The Lindstr\"om type theorems characterize logics in terms of model theoretic concepts.
In 1969 Lindstr\"{o}m proved that first-order logic has the maximal expressive power among the abstract logics containing it with the compactness and the L\"{o}wenheim-Skolem properties.
This kind of characterization is widely studied for other logics specially for modal logics, for example \cite{rijke:lind95,  benthem:lind09, otto:lind08, enq:general13,zoghifard2018first}.  In addition to the compactness, bisimulation invariance property used to prove a characterization theorem for modal logic. 
Bisimulation of Markov processes is widely studied in many literature and some kind of definitions are given for them, \cite{deshar:bisim02,Deshar:approx03,danos2006bisimulation}.
In all versions of Lindstr\"om's style theorems the compactness property plays an essential rule. Kurz and Venema in \cite{venema:coalg10} asked whether one can give a version of Lindstr\"om theorem for non-compact logic such as probability logic.

Since studying probability logic from coalgebraic perspective is significant in computer science, investigating the problems of this paper and finding an appropriate version of Lindstr\"om's theorem for this logic can be a good guide for giving a general version of Lindstr\"om's theorem for non-compact logic.

One of the other valuable issues is to study first-order probability modal logic. There are a few literature considering some versions of first-order probability logic, see \cite{halpern:analysis90,savic2017first}.

\noindent
\textbf{Acknowledgments.} 
Part of results of this paper is presented in a short talk given in Advances in Modal Logic 2018 \cite{pourmahdian2018compactness}. The authors would like to thank to anonymous referees for giving some instructive comments which have been useful in proving our results.

\appendix
\section{Appendix}

In this part  some basic notions and results from measure theory, used in this paper, are reviewed. For further reading on measure theory see \cite{folland:real13}.

 Recall that a family $\mathcal{A}$ of subsets of a non-empty set $\Omega$ is called a \textit{lattice} if $\emptyset,\Omega\in \mathcal{A}$ and it is closed under finite unions and intersections. If, furthermore,  $\mathcal{A}$  is closed under complements then it is a \textit{boolean algebra} (or simply an algebra). Call an algebra $\mathcal{A}$ a $\sigma$-\textit{algebra} provided that if it is closed under countable unions. For a collection $\mathcal{A}$ of subsets of $\mathcal{P}(\Omega)$,
there exists a $\sigma$-algebra $\sigma(\mathcal{A})$ generated by $\mathcal{A}$ which is the intersection of all $\sigma$-algebras containing $\mathcal{A}$. While $\mathcal{P}(\Omega)$ is an obvious example of a $\sigma$-algebra over set $\Omega$, for a topological space $(\Omega,\tau)$ the family $\mathcal{C}$ of closed subsets of $\Omega$ forms a lattice. Furthermore members of $\sigma(\mathcal{C})$ are called Borel subsets of $\Omega$.

Moreover, $\mathcal{A}\subseteq \mathcal{P}(\Omega)$ is called a
\textit{monotone class} if it is closed under  unions of countable increasing sequences and also intersections of countable  decreasing sequences.
\begin{fact}[Monotone class]\label{monotone class}(Lemma 2.35 in \cite{folland:real13})
	The monotone class generated by an algebra $\mathcal{A}$ is equal to $\sigma(\mathcal{A})$. 
\end{fact}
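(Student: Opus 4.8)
The plan is to prove the two inclusions separately, with essentially all the content residing in showing that the monotone class generated by $\mathcal{A}$, which I denote $\mathcal{M}$, is in fact a $\sigma$-algebra. The easy inclusion $\mathcal{M}\subseteq\sigma(\mathcal{A})$ is immediate: since $\sigma(\mathcal{A})$ is closed under countable increasing unions and countable decreasing intersections, it is a monotone class containing $\mathcal{A}$, and as $\mathcal{M}$ is by definition the smallest such class, the inclusion follows.

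For the reverse inclusion $\sigma(\mathcal{A})\subseteq\mathcal{M}$, I would establish that $\mathcal{M}$ is a $\sigma$-algebra; since it contains $\mathcal{A}$, minimality of $\sigma(\mathcal{A})$ then yields the claim. The key device is, for each $E\in\mathcal{M}$, the auxiliary collection
$$\mathcal{C}(E)=\{F\subseteq\Omega \mid E\setminus F,\ F\setminus E,\ E\cap F\in\mathcal{M}\}.$$
First I would check that each $\mathcal{C}(E)$ is itself a monotone class; this is the step demanding the most care, since it amounts to verifying that the three operations $E\setminus(\cdot)$, $(\cdot)\setminus E$ and $E\cap(\cdot)$ all commute appropriately with increasing unions and decreasing intersections, using that $\mathcal{M}$ is a monotone class. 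I would also record the evident symmetry: $F\in\mathcal{C}(E)$ if and only if $E\in\mathcal{C}(F)$.

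The argument then proceeds by a two-stage bootstrap. For $E\in\mathcal{A}$ the algebra closure of $\mathcal{A}$ gives $\mathcal{A}\subseteq\mathcal{C}(E)$, and minimality of $\mathcal{M}$ forces $\mathcal{M}\subseteq\mathcal{C}(E)$. Reading this through the symmetry, for every $F\in\mathcal{M}$ and every $E\in\mathcal{A}$ we have $E\in\mathcal{C}(F)$, i.e. $\mathcal{A}\subseteq\mathcal{C}(F)$; applying minimality a second time gives $\mathcal{M}\subseteq\mathcal{C}(F)$ for all $F\in\mathcal{M}$. This says precisely that $\mathcal{M}$ is closed under differences and finite intersections. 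Since $\Omega\in\mathcal{A}\subseteq\mathcal{M}$, closure under $\Omega\setminus(\cdot)$ yields closure under complement, and De Morgan then gives closure under finite unions, so $\mathcal{M}$ is an algebra.

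Finally I would upgrade $\mathcal{M}$ from an algebra to a $\sigma$-algebra: given a countable family $(E_n)$ in $\mathcal{M}$, the partial unions $\bigcup_{k\le n}E_k$ lie in $\mathcal{M}$ by finite-union closure and form an increasing sequence, so $\bigcup_n E_n$ lies in $\mathcal{M}$ because $\mathcal{M}$ is a monotone class. Hence $\mathcal{M}$ is a $\sigma$-algebra containing $\mathcal{A}$, whence $\sigma(\mathcal{A})\subseteq\mathcal{M}$, and combined with the easy direction this gives $\mathcal{M}=\sigma(\mathcal{A})$. The main obstacle is verifying that $\mathcal{C}(E)$ is a monotone class and arranging the symmetric bootstrap correctly; once that machinery is in place, the remaining steps are routine.
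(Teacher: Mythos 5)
Your proposal is correct: the auxiliary collections $\mathcal{C}(E)$, the symmetry observation, the two-stage bootstrap via minimality, and the final upgrade from algebra to $\sigma$-algebra via increasing partial unions together constitute a complete and sound argument. The paper itself offers no proof of this statement --- it is quoted as a known fact from the cited reference (Folland, Lemma 2.35) --- and your argument is exactly that standard proof, so there is nothing to reconcile between the two.
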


A \textit{measurable space} is a pair $(\Omega,\mathcal{A})$ where $\mathcal{A}$ is a $\sigma$-algebra on the non-empty set $\Omega$. Each $A\in \mathcal{A}$ is named a measurable set.
For two measurable spaces $(X,\mathcal{A})$ and $(Y,\mathcal{B})$ the function
$f:X\rightarrow Y$ is a \textit{measurable function} if $f^{-1}(B)\in\mathcal{A}$ for each $B\in\mathcal{B}$.

Let $\mathcal{A}$ be a lattice over $\Omega$. Then a non-negative real-valued set function
$\mu:\mathcal{A}\rightarrow \mathbb{R}$ is a \textit{valuation} if it satisfies the following conditions:
\begin{itemize}
	\item (Strictness) $\mu(\emptyset)=0 $,
	\item (Monotonicity) if $A\subseteq B$ is in $\mathcal{A}$, then $\mu(A)\leq \mu(B)$,
	\item  (Modularity) $\mu(A)+\mu(B)=\mu(A\cup B)+\mu(A\cap B)$, for all $A, B\in\mathcal{A}$.
\end{itemize}
	
	In case $\mathcal{A}$ is an algebra then  the function
	$\mu:\mathcal{A}\rightarrow \mathbb{R}$ is called a \textit{finitely additive measure}. In this situation modularity implies monotonicity.   Furthermore, $\mu$ is \textit{premeasure} whenever for any $\{A_i\}_{i\in\mathbb{N}}$  of pairwise disjoint members of $\mathcal{A}$ if $\bigcup_{i\in\mathbb{N}}A_i\in \mathcal{A}$, then $\mu(\bigcup_{i\in\mathbb{N}} A_i)=\sum_{i\in\mathbb{N}}\mu(A_i)$. %\todo{or equivalently  ...}

	Finally  for a $\sigma$-algebra $\mathcal{A}$ a premeasure 
	$\mu: \mathcal{A}\rightarrow \mathbb{R}$ is called a  \textit{$\sigma$-additive measure}.

A finitely or a $\sigma$-additive measure $\mu$ is a probability measure when $\mu(\Omega)=1$. For brevity,  a $\sigma$-additive measure is simply called a measure.

\begin{fact}\label{premeas equiv}%\cite{}
	Let $\mathcal{A}$ be an algebra over $\Omega$. 	A function 
	$\mu:\mathcal{A}\rightarrow \mathbb{R}$ is a premeasure if for each decreasing sequence
	$A_0\supseteq A_1\supseteq \dots$
	of elements of $\mathcal{A}$, if
	$\bigcap_{i} A_i=\emptyset$
	then 
	$\lim_{i\rightarrow\infty} \mu(A_i)=0$.
\end{fact}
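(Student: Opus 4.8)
The plan is to recognize this as the classical characterization of countable additivity via continuity from above at the empty set, for a finitely additive measure on an algebra. First I would make explicit the hypothesis that is only implicit in the statement: $\mu$ must be taken to be a finitely additive measure on $\mathcal{A}$ (non-negative, strict, and modular). This is necessary, since continuity at $\emptyset$ applied to the constant sequence $A_i=\emptyset$ only recovers $\mu(\emptyset)=0$ and says nothing about additivity over disjoint unions; in the intended application (the preceding lemma) the function $T'$ has already been shown to be finitely additive, so this is the setting. Granting finite additivity, the goal is to verify the premeasure equation: for any pairwise disjoint $\{A_i\}_{i\in\mathbb{N}}\subseteq\mathcal{A}$ whose union $B=\bigcup_i A_i$ also lies in $\mathcal{A}$, one has $\mu(B)=\sum_i\mu(A_i)$.

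The central device is to pass from the countable union to the decreasing sequence of its tails. I would put $C_n=B\setminus\bigcup_{i=0}^n A_i$. Because $\mathcal{A}$ is an algebra, closed under complements and finite unions, each $C_n\in\mathcal{A}$; the sequence $C_0\supseteq C_1\supseteq\cdots$ is decreasing, and $\bigcap_n C_n=\emptyset$ since every point of $B$ belongs to some $A_j$ and is hence excluded from $C_j$ onward. Finite additivity applied to the disjoint decomposition $B=\left(\bigcup_{i=0}^n A_i\right)\cup C_n$ yields $\mu(B)=\sum_{i=0}^n\mu(A_i)+\mu(C_n)$ for every $n$, where all quantities are finite because $\mu$ is real-valued and monotone.

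Now I would invoke the hypothesis: $(C_n)_n$ is a decreasing sequence of elements of $\mathcal{A}$ with empty intersection, so continuity at $\emptyset$ gives $\lim_{n\to\infty}\mu(C_n)=0$. Passing to the limit in the identity above, the partial sums $\sum_{i=0}^n\mu(A_i)$ converge to $\mu(B)$; since the terms are non-negative and bounded by $\mu(B)$ this convergence is unconditional, and it is precisely the premeasure equation $\mu(B)=\sum_i\mu(A_i)$. This establishes the stated direction.

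\textbf{Main obstacle.} The argument is standard measure theory, so there is no deep difficulty; the only points demanding care are the bookkeeping that each tail $C_n$ really lies in $\mathcal{A}$ and that $\bigcap_n C_n=\emptyset$, together with the decision to make the finite-additivity hypothesis explicit rather than leave it buried in the word ``premeasure''. For completeness I would also record the converse (premeasure implies continuity at $\emptyset$): given a decreasing sequence with $\bigcap_n A_n=\emptyset$, write $A_0$ as the disjoint union $\bigcup_n(A_n\setminus A_{n+1})$ and apply countable additivity, noting that the partial sums telescope to $\mu(A_0)-\mu(A_N)$; however, only the ``if'' direction is used in the sequel, so I would keep the emphasis there.
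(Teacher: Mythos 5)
Your proof is correct, and it is the standard argument. Note that the paper itself offers no proof of this Fact: it is stated in the appendix as background (the classical continuity-at-$\emptyset$ characterization of premeasures on an algebra), so there is no authorial argument to diverge from; your tail-set decomposition $C_n=B\setminus\bigcup_{i=0}^{n}A_i$, the identity $\mu(B)=\sum_{i=0}^{n}\mu(A_i)+\mu(C_n)$, and passage to the limit is exactly the intended proof. Your decision to make finite additivity explicit is well taken and not pedantry: as literally worded the Fact is false for an arbitrary function with $\mu(\emptyset)=0$ (on a finite algebra any decreasing sequence with empty intersection is eventually empty, so the continuity hypothesis is vacuous there), but the paper's own terminological chain --- valuation, then finitely additive measure, then premeasure --- builds additivity into the word ``premeasure'', and in the application the function $T'((w_i)_U)(\cdot)=\lim_U T_i(w_i)(\cdot)$ is indeed finitely additive since $U$-limits respect finite sums and $(A_i)_U\cup(B_i)_U=(A_i\cup B_i)_U$, a point the paper leaves implicit. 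Your sketch of the converse via the telescoping decomposition $A_0=\bigcup_n\left(A_n\setminus A_{n+1}\right)$ is also correct, and you rightly observe that only the ``if'' direction is used in the proof of the premeasure lemma for the ultraproduct type function.
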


A \textit{measure space} is a triple $(X,\mathcal{A},\mu)$ where $\mu$ is a measure on the $\sigma$-algebra $\mathcal{A}$.

The following standard fact states how to extend a valuation over a lattice $\mathcal{L}$ to a finitely additive measure over algebra $\mathcal{B}(\mathcal{L})$ generated by $\mathcal{L}$.    

\begin{fact}[Smiley--Horn--Tarski Theorem in \cite{ghk:conla03}]\label{SHT thm}
	Let $\mu$ be a valuation defined on a lattice  $\mathcal{L}$ of subsets of $X$. Then $\mu$ can be uniquely  extended to a finitely additive measure $\mu^*$ on the algebra $\mathcal{B}(\mathcal{L})$ generated by $\mathcal{L}$.
	
\end{fact}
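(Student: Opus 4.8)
The plan is to build $\mu^*$ explicitly by cutting every element of $\mathcal{B}(\mathcal{L})$ into finitely many disjoint ``atoms'' coming from a finite sublattice, assigning each atom a nonnegative weight read off from $\mu$, and summing. Recall that a lattice of sets is automatically distributive, and that the modularity axiom for the valuation $\mu$ propagates (by an easy induction on the number of sets) to the full inclusion–exclusion identity on $\mathcal{L}$; I will use this repeatedly. The uniqueness half is easy and I treat it last, so the construction is the real content.

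First I would reduce to a finite sublattice. Fix $C\in\mathcal{B}(\mathcal{L})$. Since $\mathcal{B}(\mathcal{L})$ is generated by $\mathcal{L}$ as a boolean algebra, $C$ is built from finitely many members of $\mathcal{L}$; let $\mathcal{L}_0\subseteq\mathcal{L}$ be the finite sublattice they generate, which contains $\emptyset$ and $X$. For $L\in\mathcal{L}_0$ set $L^-=\bigcup\{M\in\mathcal{L}_0:\ M\subsetneq L\}$, again a member of $\mathcal{L}_0$ since a finite lattice is closed under joins, and put $D_L=L\setminus L^-$. A short argument shows the nonempty $D_L$ partition $X$: every $x\in X$ lies in a smallest element $L_x=\bigcap\{M\in\mathcal{L}_0:\ x\in M\}$, and then $x\in D_{L_x}$ and $x\notin D_M$ for $M\ne L_x$. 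It follows that each $L\in\mathcal{L}_0$ is the disjoint union $L=\bigsqcup\{D_M:\ M\subseteq L\}$, that complements of members of $\mathcal{L}_0$ are again unions of $D_L$'s, and hence that $\mathcal{B}(\mathcal{L}_0)$ is exactly the collection of finite disjoint unions of these atoms; in particular $C=\bigsqcup\{D_L:\ D_L\subseteq C\}$.

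Next I would assign $w(D_L)=\mu(L)-\mu(L^-)$, which depends only on values of $\mu$ on lattice elements and is nonnegative by monotonicity (since $L^-\subseteq L$), and define $\mu^*(C)=\sum\{w(D_L):\ D_L\subseteq C\}$. \textbf{The crux}, and the step I expect to be the main obstacle, is the consistency identity
$$\mu(L)=\sum_{\substack{M\in\mathcal{L}_0\\ M\subseteq L}} w(D_M)\qquad(L\in\mathcal{L}_0),$$
which I would prove by induction on $|\{M\in\mathcal{L}_0:\ M\subseteq L\}|$. When $L^-\subsetneq L$ as sets the step is a clean reduction to $L^-$; the delicate case is when $L$ equals the union of the lattice elements strictly below it (so $D_L=\emptyset$ and the principal ideal does not shrink), and there I would write $L$ as the union of its coatoms $N_1,\dots,N_k$ in $\mathcal{L}_0$ and compute $\sum_{M\subsetneq L}w(D_M)$ by inclusion–exclusion, using $\bigcap_{i\in S}\downarrow N_i=\downarrow(\bigcap_{i\in S}N_i)$ together with the induction hypothesis on each strictly smaller $\bigcap_{i\in S}N_i$, so that the alternating sum collapses to $\mu(\bigcup_i N_i)=\mu(L)$ by the inclusion–exclusion identity for $\mu$. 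This single identity does three jobs at once: for $L\in\mathcal{L}$ it shows $\mu^*$ restricts to $\mu$; applied inside a larger sublattice it shows that the weight of a coarse atom is the sum of the weights of the finer atoms it splits into, so $\mu^*(C)$ is independent of the choice of $\mathcal{L}_0$ (any two admissible sublattices lie in a common one); and, since distinct atoms are disjoint and no atom straddles two disjoint elements of $\mathcal{B}(\mathcal{L}_0)$, it yields finite additivity $\mu^*(C\sqcup C')=\mu^*(C)+\mu^*(C')$ directly from the definition. With $\mu^*(\emptyset)=0$, this makes $\mu^*$ a finitely additive measure on $\mathcal{B}(\mathcal{L})$ extending $\mu$.

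Finally, uniqueness is the easy part. Any finitely additive extension $\nu$ of $\mu$ must satisfy $\nu(A\setminus B)=\nu(A)-\nu(B)=\mu(A)-\mu(B)$ whenever $B\subseteq A$ in $\mathcal{L}$, hence $\nu(D_L)=\mu(L)-\mu(L^-)=w(D_L)$ for every atom; adding over the finite partition $C=\bigsqcup\{D_L:\ D_L\subseteq C\}$ forces $\nu(C)=\mu^*(C)$ for all $C\in\mathcal{B}(\mathcal{L})$, so $\nu=\mu^*$.
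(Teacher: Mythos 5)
The paper never proves this statement: it is imported as a Fact with a citation to Gierz et al.\ (and ultimately to Smiley and Horn--Tarski), so there is no internal proof to compare against; your proposal has to be judged on its own, and it holds up. The atom decomposition is sound: since the paper's definition of a lattice of sets includes $\emptyset,X\in\mathcal{L}$, every $x$ has a smallest element $L_x\in\mathcal{L}_0$, the nonempty $D_L$ partition $X$, each atom is contained in or disjoint from every member of $\mathcal{L}_0$, and distinct nonempty atoms come from distinct $L$ (if $x\in D_L$ then $L_x=L$). Your crux identity $\mu(L)=\sum_{M\subseteq L}w(D_M)$ is correct with exactly the two-case induction you describe: in the first case one has $\{M\in\mathcal{L}_0:\ M\subsetneq L\}=\{M:\ M\subseteq L^-\}$, so the sum telescopes against $\mu(L^-)$; in the second case $\bigcap_{i\in S}\{M:\ M\subseteq N_i\}=\{M:\ M\subseteq\bigcap_{i\in S}N_i\}$ holds because $\mathcal{L}_0$ is closed under intersections, each $\bigcap_{i\in S}N_i$ is strictly below $L$ so the inductive hypothesis applies, and the inclusion--exclusion law for $\mu$ does follow from modularity by induction since lattices of sets are distributive. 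Two hairline points are worth making explicit in a write-up: empty atoms automatically get weight zero (if $D_L=\emptyset$ then $L=L^-$ as sets, so $\mu(L)-\mu(L^-)=0$), which is what makes the defining sum unambiguous; and the independence-of-$\mathcal{L}_0$ step silently uses that a finite subset of a distributive lattice generates a \emph{finite} sublattice (free distributive lattices on finitely many generators are finite), which is what lets two admissible sublattices sit inside a common one, after which your refinement computation $\sum\{w_1(D_M):\ D_M\subseteq D_L\}=\mu(L)-\mu(L^-)=w_0(D_L)$ goes through. For comparison, the classical proofs in the cited sources typically represent members of $\mathcal{B}(\mathcal{L})$ as finite disjoint unions of proper differences $A\setminus B$ with $B\subseteq A$ in $\mathcal{L}$ and verify well-definedness of $\sum_i(\mu(A_i)-\mu(B_i))$ directly from modularity; your atoms are a canonical instance of such a representation, which buys you well-definedness, additivity, and uniqueness almost mechanically, at the cost of the inductive consistency identity --- a fair trade, and the resulting proof is correct.
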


The following facts can also be shown using Carath\'{e}odory's extension theorem.  

\begin{fact}(Theorem 1.22 in \cite{horntarski:measure48})\label{tarski fa}
	Let $\mu$ be a finitely additive measure on a boolean algebra $\mathcal{A}$ of $X$.  Then $\mu$ could be extended to a finitely additive measure on any boolean algebra $\mathcal{A}'$ containing $\mathcal{A}$.
\end{fact}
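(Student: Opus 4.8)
The plan is to recast the finitely additive measure $\mu$ as a positive linear functional on a space of simple functions and then extend it by the algebraic Hahn--Banach theorem, exploiting the fact that finite additivity of a set function is exactly linearity of the corresponding functional on indicator functions.

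First I would set up the function spaces. Let $V$ be the real vector space of $\mathcal{A}'$-simple functions on $X$ and $W\subseteq V$ the subspace of $\mathcal{A}$-simple functions; the inclusion $W\subseteq V$ holds because $\mathcal{A}\subseteq\mathcal{A}'$. Define $\phi:W\to\mathbb{R}$ on a representative $f=\sum_i a_i\mathbf{1}_{A_i}$ (with $A_i\in\mathcal{A}$) by $\phi(f)=\sum_i a_i\mu(A_i)$; finite additivity of $\mu$ makes $\phi$ well-defined and linear. Since $\mu$ is real-valued and monotone, $\mu(X)<\infty$ and $\mu\geq 0$.

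The crux is to extend $\phi$ to $V$ while preserving positivity, so that the extended functional again comes from a finitely additive measure. Plain Hahn--Banach gives a linear extension but need not respect sign, so the key device is to dominate $\phi$ by the sublinear functional $p(f)=\mu(X)\cdot\sup_{x\in X}f(x)$. One checks that $p$ is sublinear and that $\phi(f)\leq p(f)$ for every $f\in W$ (because $f\leq\sup f$ pointwise and $\mu\geq 0$). Hahn--Banach then yields a linear $\Phi:V\to\mathbb{R}$ extending $\phi$ with $\Phi\leq p$ throughout $V$. Applying this bound to $-f$ shows that $f\geq 0$ forces $\Phi(f)\geq 0$, since $p(-f)=-\mu(X)\inf f\leq 0$; hence $\Phi$ is positive.

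Finally I would define $\mu'(S)=\Phi(\mathbf{1}_S)$ for $S\in\mathcal{A}'$ and read off the conclusion: $\mu'(\emptyset)=\Phi(0)=0$; non-negativity from positivity of $\Phi$; finite additivity from linearity, since disjoint $S,T\in\mathcal{A}'$ satisfy $\mathbf{1}_{S\cup T}=\mathbf{1}_S+\mathbf{1}_T$; and $\mu'|_{\mathcal{A}}=\mu$ because $\Phi$ extends $\phi$. The main obstacle is exactly the positivity-preserving extension, and it is resolved once the dominating functional $p$ is identified; everything else is a routine unwinding of definitions. As a self-contained alternative that avoids Hahn--Banach, one could apply Zorn's lemma to the poset of finitely additive measures on intermediate subalgebras of $\mathcal{A}'$ extending $\mu$, reducing to a single-set extension across some $E\in\mathcal{A}'\setminus\mathcal{A}$; there the work moves to proving $\mu_*(E)\leq\mu^*(E)$ for the inner and outer $\mu$-measures of $E$ and verifying that the induced set function on the algebra generated by $\mathcal{A}\cup\{E\}$ is well-defined and additive.
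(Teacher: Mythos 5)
Your Hahn--Banach argument is correct, and all the pieces fit: $\phi$ is well defined on $\mathcal{A}$-simple functions by a common-refinement argument using finite additivity, $p(f)=\mu(X)\sup_X f$ is sublinear because $\mu(X)\geq 0$, the domination $\phi\leq p$ on $W$ follows by writing $f$ over a partition of $X$, and the positivity of $\Phi$ from $\Phi(-f)\leq p(-f)\leq 0$ for $f\geq 0$ is exactly right; the induced set function $\mu'(S)=\Phi(\mathbf{1}_S)$ then inherits strictness, non-negativity, additivity (hence monotonicity and modularity), and agreement with $\mu$ on $\mathcal{A}$. Note, however, that the paper offers no proof at all: the statement is quoted as Theorem 1.22 of Horn and Tarski, and the proof in that source proceeds essentially along the lines of your ``self-contained alternative'' --- Zorn's lemma over the poset of partial extensions, reduced to adjoining a single set $E\in\mathcal{A}'\setminus\mathcal{A}$, with the new value chosen in the interval $[\mu_*(E),\mu^*(E)]$ determined by the inner and outer measures. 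So your primary route is genuinely different from the cited one. The trade-off: the functional-analytic proof is shorter and cleaner, since positivity falls out of the dominating functional and there is no well-definedness check for the one-set extension; the Horn--Tarski route is more elementary (no function spaces) and yields strictly more information, namely that $\mu'(E)$ can be prescribed to be \emph{any} value in $[\mu_*(E),\mu^*(E)]$, a flexibility Horn and Tarski exploit for their finer results on Boolean algebras. Both arguments rely on the axiom of choice, so nothing is lost there; and your proof correctly avoids Carath\'eodory's theorem, which concerns $\sigma$-additive extensions of premeasures and does not directly give this finitely additive statement despite the paper's passing remark.
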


\begin{fact}\label{caratheo}(Theorem 1.14 in \cite{folland:real13})
	Let $\mu$ be a finite premeasure on boolean algebra $\mathcal{A}$. Then $\mu$ has a unique extension to $\mu^*$ on $\sigma(A)$.
\end{fact}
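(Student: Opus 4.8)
The plan is to prove existence by the classical outer-measure construction of Carath\'eodory, and then to derive uniqueness from finiteness together with the monotone class theorem (Fact \ref{monotone class}). Writing $\Omega$ for the underlying set, I would first define an outer measure $\mu^*$ on all of $\mathcal{P}(\Omega)$ by
$$\mu^*(E) = \inf\Big\{ \sum_{n=1}^\infty \mu(A_n) \;\Big|\; A_n\in\mathcal{A},\ E\subseteq \bigcup_{n=1}^\infty A_n \Big\}.$$
A routine verification gives $\mu^*(\emptyset)=0$, monotonicity, and countable subadditivity, so $\mu^*$ is an outer measure. Calling a set $E$ Carath\'eodory measurable when $\mu^*(A)=\mu^*(A\cap E)+\mu^*(A\cap E^c)$ for every test set $A$, Carath\'eodory's lemma shows that the family $\mathcal{M}$ of measurable sets is a $\sigma$-algebra on which $\mu^*$ restricts to a $\sigma$-additive measure. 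This step is purely formal and uses only subadditivity of $\mu^*$.

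Two substantive steps then yield the extension. First I would check that every $E\in\mathcal{A}$ is $\mu^*$-measurable: for an arbitrary test set $A$ and any cover of $A$ by members of $\mathcal{A}$, one splits each covering set by $E$ and $E^c$ inside the algebra and applies finite additivity of $\mu$, giving the Carath\'eodory identity. Hence $\mathcal{A}\subseteq\mathcal{M}$, and therefore $\sigma(\mathcal{A})\subseteq\mathcal{M}$. Second, I would show $\mu^*(A)=\mu(A)$ for all $A\in\mathcal{A}$; the inequality $\mu^*(A)\le\mu(A)$ is immediate from the trivial cover $A=A\cup\emptyset\cup\dots$, while the reverse follows by taking any cover $A\subseteq\bigcup_n A_n$, disjointifying inside the algebra to obtain $B_n\subseteq A_n$ with $B_n\in\mathcal{A}$ and $\bigcup_n B_n=A$, and invoking countable additivity of the premeasure (its defining property) to get $\mu(A)=\sum_n\mu(B_n)\le\sum_n\mu(A_n)$. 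Restricting $\mu^*$ to $\sigma(\mathcal{A})$ thus produces a $\sigma$-additive measure extending $\mu$, and finiteness of $\mu$ is inherited.

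For uniqueness, suppose $\nu$ is another measure on $\sigma(\mathcal{A})$ agreeing with $\mu$ on $\mathcal{A}$, and set $\mathcal{C}=\{E\in\sigma(\mathcal{A}) : \mu^*(E)=\nu(E)\}$. Since both measures are finite and agree on the algebra $\mathcal{A}$, continuity from below and from above (the latter available precisely because the measures are finite) shows that $\mathcal{C}$ is closed under increasing unions and decreasing intersections, so $\mathcal{C}$ is a monotone class containing $\mathcal{A}$. By the monotone class theorem (Fact \ref{monotone class}), $\mathcal{C}=\sigma(\mathcal{A})$, whence the extension is unique.

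The main obstacle is the identity $\mu^*=\mu$ on $\mathcal{A}$: this is the sole point at which countable additivity of the premeasure, rather than mere finite additivity, is used, and it is exactly the property that can fail for a general finitely additive set function and thereby obstruct extension to a genuine measure. The finiteness hypothesis, by contrast, is needed only for uniqueness, where it licenses the continuity-from-above step in the monotone class argument.
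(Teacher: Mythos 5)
Your proposal is correct, and it is essentially the argument the paper points to: the paper states this fact without proof, citing Theorem 1.14 of \cite{folland:real13}, and your outline (outer measure via countable covers from $\mathcal{A}$, Carath\'eodory measurability of the algebra, agreement $\mu^*=\mu$ on $\mathcal{A}$ by disjointification plus the premeasure property, uniqueness from finiteness) is exactly that standard construction, including the correct diagnosis of where countable additivity of the premeasure, as opposed to mere finite additivity, enters. The only minor divergence is in the uniqueness step: Folland argues by outer-measure approximation (first $\nu\le\mu^*$ on $\sigma(\mathcal{A})$, then equality on sets of finite measure, extended to the $\sigma$-finite case), whereas you use continuity from above together with the monotone class theorem (Fact \ref{monotone class}); in the finite setting assumed here your route is equally valid and arguably cleaner, since finiteness of $\nu$ is immediate from $\nu(\Omega)=\mu(\Omega)<\infty$, which licenses the decreasing-intersection step.
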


%\bibliographystyle{acm}
%\bibliography{references}

\end{document}